\def\C{{\mathbb C}}
\def\F{{\mathbb F}}
\def\Q{{\mathbb Q}}
\def\R{{\mathbb R}}
\def\Z{{\mathbb Z}}
\theoremstyle{plain}
\newtheorem{theorem}{Theorem}[section]
\newtheorem{lemma}[theorem]{Lemma}
\newtheorem{proposition}[theorem]{Proposition}
\newtheorem{corollary}[theorem]{Corollary}
\newtheorem{definition}[theorem]{Definition}
\newtheorem{remark}{Remark}
\newtheorem{definition and lemma}[theorem]{Definition and Lemma}
\theoremstyle{definition} 
\newtheorem{example}[theorem]{Example}
\newtheorem{question}{Question}
\theoremstyle{remark} 
\DeclareFontFamily{U}{wncy}{}
\DeclareFontShape{U}{wncy}{m}{n}{<->wncyr10}{}
\DeclareSymbolFont{mcy}{U}{wncy}{m}{n}
\DeclareMathSymbol{\Sha}{\mathord}{mcy}{"58}
\title[Automorphism groups of absolutely simple polarized abelian varieties]{Absolutely simple polarized abelian varieties of odd Sophie Germain prime dimension over finite fields with maximal automorphism groups}
\author[WonTae Hwang]{WonTae Hwang$^*$}
\address{School of Mathematics, Korea Institute for Advanced Study, Hoegiro 85, Seoul, South Korea}
\email{hwangwon@kias.re.kr}
\author[Kyunghwan Song]{Kyunghwan Song}
\address{Ewha Institute of Mathematical Sciences, Ewha Womans University, Seoul, Republic of Korea}
\email{khsong0118@ewha.ac.kr}
\begin{document}

\subjclass[2010]{Primary 11G10, 11G25, 14K02, 20B25}

\keywords{Polarized abelian fourfolds over finite fields, Automorphism groups, Distribution of primes}

\maketitle

\begin{abstract}
For each Sophie Germain prime $g \geq 5,$ we construct an absolutely simple polarized abelian variety of dimension $g$ over a finite field, whose automorphism group is a cyclic group of order $4g+2$. We also provide a description on the asymptotic behavior of prime numbers that are related to our construction.  \\

\end{abstract}

\section{Introduction}
Let $k$ be a finite field, and let $X$ be an abelian variety of dimension $g$ over $k.$ We denote the endomorphism ring of $X$ over $k$ by $\textrm{End}_k(X)$. It is a free $\Z$-module of rank $\leq 4g^2.$ We also let $\textrm{End}^0_k(X)=\textrm{End}_k(X) \otimes_{\Z} \Q.$ This $\Q$-algebra $\textrm{End}_k^0(X)$ is called the endomorphism algebra of $X$ over $k.$ Then $\textrm{End}_k^0(X)$ is a finite dimensional semisimple algebra over $\Q$ with $\textrm{dim}_{\Q} \textrm{End}_k^0(X) \leq 4g^2.$ Moreover, if $X$ is $k$-simple, then $\textrm{End}_k^0(X)$ is a division algebra over $\Q$. Now, it is well known that $\textrm{End}_k (X)$ is a $\Z$-order in $\textrm{End}_k^0(X).$ The group $\textrm{Aut}_k(X)$ of the automorphisms of $X$ over $k$ is not finite, in general. But if we fix a polarization $\mathcal{L}$ on $X$, then the group $\textrm{Aut}_k(X,\mathcal{L})$ of the automorphisms of the polarized abelian variety $(X,\mathcal{L})$ is finite. Hence it might be natural to ask which finite groups can occur as such automorphism groups. Along this line, the first author\cite{Hwang(2019)} gave a classification of finite groups that can be realized as the full automorphism groups of simple polarized abelian varieties of odd prime dimension over finite fields. One of the main results of \cite{Hwang(2019)} says that the automorphism groups of simple polarized abelian varieties of odd prime dimension over finite fields are necessarily cyclic (see \cite[Theorem 4.1]{Hwang(2019)}). Furthermore, it was also shown that for a prime $g \geq 5$ with $2g+1$ being a prime, the maximal such a cyclic group is of order $4g+2.$ (Recall that such a prime $g$ is called a Sophie Germain prime.) Note that the simple abelian varieties that were constructed in the proof of \cite[Theorem 4.1-$\sharp 6$]{Hwang(2019)} become non-simple over some field extensions of the base fields.  
The goal of this paper is to consider the opposite situation, namely, to construct absolutely simple polarized abelian varieties of a Sophie Germain prime dimension over finite fields, whose automorphism group is a cyclic group of maximal order. 
In this aspect, one of our main results is the following
\begin{theorem}\label{main theorem1}
	For each Sophie Germain prime $g \geq 5,$ there exists an absolutely simple polarized abelian variety of dimension $g$ over a finite field, whose automorphism group is a cyclic group $C_{4g+2}$ of order $4g+2.$
\end{theorem}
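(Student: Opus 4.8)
The plan is to realize the desired object through Honda--Tate theory applied to the cyclotomic field $K := \bQ(\zeta_{2g+1})$. Since $2g+1$ is prime, $K$ is a CM field with $[K:\bQ] = \varphi(2g+1) = 2g$, its ring of integers is $\bZ[\zeta_{2g+1}]$, and its group of roots of unity $\mu(K) = \langle -\zeta_{2g+1}\rangle$ is cyclic of order $2(2g+1) = 4g+2$. The first step is the reduction: it is enough to produce an absolutely simple abelian variety $X$ of dimension $g$ over some finite field $\bF_q$ with $\End_{\bF_q}(X) = \cO_K$ (the full maximal order). Indeed, once $\End_{\bF_q}^0(X)=K$, the ring $\End_{\bF_q}(X)$ is an order in $K$, hence contained in $\cO_K$; and for \emph{any} polarization $\cL$ on $X$ the group $\Aut_{\bF_q}(X,\cL)$ consists of those $u \in \End_{\bF_q}(X)^\times$ with $u^{\dagger}u = 1$, where $\dagger$ is the Rosati involution. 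Since $K$ is CM its unique positive involution is complex conjugation, so $u^{\dagger} = \bar u$, and $u\bar u = 1$ forces $|\iota(u)| = 1$ for every embedding $\iota\colon K\hookrightarrow\bC$, whence $u \in \mu(K)$ by Kronecker's theorem. Thus $\Aut_{\bF_q}(X,\cL) = \mu(K) \cong C_{4g+2}$ exactly when the order is maximal (cf.\ \cite{Hwang(2019)}).

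Next I would pin down the isogeny class by a suitable Weil $q$-number. Pick a prime $p$ splitting completely in $K$, i.e.\ $p \equiv 1 \pmod{2g+1}$; the primes of $K$ above $p$ then biject with $G := \Gal(K/\bQ)$, a cyclic group of order $2g$ whose unique order-two element is complex conjugation $c$ and whose unique index-two subgroup $H$ has order $g$. Choose a CM type $\Phi$ --- a set of primes above $p$ with $\Phi \sqcup c\Phi$ the full set --- that is \emph{rigid}, in the sense that no nontrivial element of $G$ stabilizes it. Such $\Phi$ exists: a translation-stabilized CM type must be a single $H$-coset (the stabilizer cannot contain $c$, and in a cyclic group of order $2g$ the only subgroup missing $c$ apart from $\{1\}$ is $H$), so there are only two non-rigid CM types, while the total number is $2^g > 2$ for $g \ge 5$. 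Let $\fm := \prod_{v \in \Phi} v$, let $n$ be the order of the ideal class of $\fm$, write $\fm^{\,n} = (\pi_0)$, and put $u := \pi_0\bar\pi_0/p^n$ and $\pi := \pi_0^{\,2}/u$. Since $p$ splits completely, $\fm\,\overline{\fm} = (p)$, so $(\pi_0\bar\pi_0) = (p^n)$ and $u$ is a unit of $\cO_K$ fixed by $c$; hence $\pi \in \cO_K$, $\pi\bar\pi = p^{2n}$, and $|\iota(\pi)| = p^n$ for all $\iota$, so $\pi$ is a Weil $q$-number with $q = p^{2n}$. Moreover $(\pi) = \fm^{\,2n}$, so $\pi$ has $v$-adic valuation $v(q)$ for $v\in\Phi$ and $0$ otherwise, i.e.\ $\pi$ is ordinary; and rigidity gives $\bQ(\pi^m) = K$ for all $m \ge 1$, because any $\sigma\in G$ fixing $\pi^m$ fixes the ideal $\fm^{\,2nm}$ and hence stabilizes $\Phi$.

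By Honda--Tate, $\pi$ is the Frobenius of a simple abelian variety $X_0$ over $\bF_q$; as $\pi$ is ordinary, $\End_{\bF_q}^0(X_0) = \bQ(\pi) = K$, so $\dim X_0 = g$, and as $\bQ(\pi^m) = K$ for every $m$, $X_0$ is absolutely simple. Within the isogeny class of $X_0$, Deligne's equivalence of categories for ordinary abelian varieties over finite fields (see also Howe) yields an abelian variety $X$ with $\End_{\bF_q}(X) = \cO_K$; it is again absolutely simple, being isogenous to $X_0$. Choosing any polarization on $X$ and invoking the first step finishes the construction. The collection of primes $p$ admissible for this procedure --- those $\equiv 1 \pmod{2g+1}$, together with any further splitting conditions one imposes to keep $q$ under control --- is then counted by a Chebotarev/Dirichlet argument, which underlies the asymptotic statement advertised in the abstract.

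The step I expect to be the main obstacle is obtaining an abelian variety whose endomorphism ring is the \emph{full} maximal order $\cO_K$, and not merely some order containing $\bZ[\pi,\bar\pi]$: this is precisely what forces all of $\mu(K)$ --- rather than a proper cyclic subgroup --- into the automorphism group, and it rests on the finer structure theory of ordinary abelian varieties over finite fields rather than on Honda--Tate alone. Two further points require care: the combinatorial selection of a rigid CM type, which is simultaneously responsible for absolute simplicity and clarifies why the maximal cyclic order $4g+2$ cannot be attained absolutely-simply in small dimension; and the unit correction $\pi = \pi_0^{\,2}/u$, a routine manoeuvre that must nonetheless be checked to preserve both the ordinarity of $\pi$ and the rigidity of the ideal $\fm^{\,2n}$.
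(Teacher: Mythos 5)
Your construction is essentially correct, but it follows a genuinely different route from the paper. The paper stays in case (2) of Lemma \ref{poss end alg}: it chooses $p=x^2+(2g+1)y^2$ with $p\not\equiv 1\pmod{2g+1}$ (Lemma \ref{lem_p_1_2g+1}, via ring class fields and Chebotarev), takes the quadratic Weil number $\pi$ with $\pi^2+ap^{(g-1)/2}\pi+p^g=0$, so that $\End_k^0(X)$ is a degree-$g$ division algebra $D$ over $\Q(\sqrt{-(2g+1)})$, embeds $\Q(\zeta_{4g+2})$ into $D$ by matching local degrees (Linowitz--Shemanske), realizes a maximal order containing $\Z[\zeta_{4g+2}]$ by Waterhouse, and then forces $\langle\zeta_{4g+2}\rangle\leq\Aut_k(X',\cL')$ by averaging an ample bundle over the group, with maximality coming from \cite[Corollary 3.6]{Hwang(2019)}. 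You instead work in case (1): a split prime $p\equiv 1\pmod{2g+1}$, a rigid CM type $\Phi$ for $K=\Q(\zeta_{2g+1})$, the unit-corrected ordinary Weil number $\pi=\pi_0^2/u$ with $(\pi)=\fm^{2n}$, absolute simplicity from $\Q(\pi^m)=K$ for all $m$ (this is exactly the Howe--Zhu criterion \cite[Proposition 3]{5} that the paper also invokes), and $\End_k(X)=\cO_K=\Z[\zeta_{2g+1}]$ so that $\mu(K)\cong C_{4g+2}$ sits inside the unit group; your verifications of ordinarity, of $\pi\bar\pi=p^{2n}$, and of the rigidity count ($2^g>2$ CM types, only two stabilized, since a nontrivial stabilizer avoiding $c$ must be the index-two subgroup) are all sound. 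What each approach buys: yours produces \emph{ordinary} ($p$-rank $g$) examples, which in fact addresses the question raised in the remark after Theorem \ref{main thm} (the paper's examples have $p$-rank $0$), and it avoids the noncommutative embedding argument entirely; the paper's approach is what yields the density statement of Theorem \ref{main theorem2} via the representation condition (P1). Two small repairs to yours: first, the step you flag as the ``main obstacle'' is easier than you fear --- since $D=K$ is commutative, $\cO_K$ is its unique maximal order, so the paper's own quoted result (Proposition \ref{endposs}(b), i.e.\ Waterhouse) already gives $\End_k(X)=\cO_K$ without Deligne's equivalence; second, your claim that \emph{any} polarization works is only valid if $\Aut_k(X,\cL)$ is taken to mean automorphisms preserving the polarization isogeny $\phi_\cL$ ($u^\dagger u=1$ only forces $u^*\cL\otimes\cL^{-1}\in\Pic^0$, not $u^*\cL\cong\cL$), so under the paper's line-bundle convention you should still replace $\cL$ by $\bigotimes_{\zeta\in\mu(K)}\zeta^*\cL$ exactly as in the paper's proof; with that routine averaging, both inclusions $\mu(K)\leq\Aut_k(X,\cL)\leq\{u\in\cO_K^\times: u\bar u=1\}=\mu(K)$ hold and your argument is complete.
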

\begin{remark}
	The case for $g=3$ was already treated in \cite[Theorem 4.1-$\sharp4$]{Hwang(2019)}.
\end{remark}

The proof of Theorem \ref{main theorem1} requires a variety of knowledge including the theories of abelian varieties over finite fields, central simple division algebras over number fields, binary quadratic forms, and the ramification theory of number fields. For more details, see Theorem \ref{main thm}. \\ 
\indent Another thing that we are interested in is to examine the distribution of the prime numbers $p$, which can be realized as the characteristic of the base field of some absolutely simple abelian varieties in our construction. In this regard, the following is our second main theorem of this paper:  
\begin{theorem}\label{main theorem2}
	For a Sophie Germain prime $g \geq 5,$ let $P_G$ be the set of primes $p$ which can be realized as the characteristic of the base field of absolutely simple polarized abelian varieties with $G:=C_{4g+2}$ as its full automorphism group. Then the (natural) density of $P_G$ is at least $\frac{1}{2 \cdot h(-8g-4)} \cdot \left(1-\frac{1}{g} \right)$.    
\end{theorem}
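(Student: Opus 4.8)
The plan is to quantify, via the explicit construction underlying Theorem \ref{main theorem1} (i.e. Theorem \ref{main thm}), exactly which primes $p$ are permissible as the characteristic, and then to estimate the density of that set of primes by a Chebotarev-type argument together with the theory of binary quadratic forms of discriminant $-8g-4 = -4(2g+1)$. Concretely, the construction produces an absolutely simple abelian variety of dimension $g$ over $\mathbb{F}_{p^n}$ whose endomorphism algebra contains the cyclotomic-type field $K = \mathbb{Q}(\zeta_{4g+2})$ (of degree $\varphi(4g+2) = 2g$, since $g$ is an odd prime), and whose Weil $q$-number generates $K$; the automorphism group equals $C_{4g+2}$ precisely when the unit group of the relevant order has the maximal torsion $\mu_{4g+2}$, which forces $K = \mathbb{Q}(\zeta_{4g+2})$ exactly. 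So the first step is to isolate the arithmetic condition on $p$: one needs $p$ to split (or behave) in $K$ in the prescribed way so that a Weil number of the right type exists, and simultaneously one needs the division algebra condition at $p$ (the local invariants of $\mathrm{End}^0$ at the primes above $p$ must be arranged so that the algebra is a division algebra of the correct dimension and the variety is absolutely simple). I would extract from Theorem \ref{main thm} the statement that this holds exactly when $p$ satisfies a splitting condition in a compositum of $K$ with an auxiliary quadratic field, which is where the class number $h(-8g-4)$ enters.

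Next I would translate "$p$ is represented by some form in a prescribed genus/class of binary quadratic forms of discriminant $-8g-4$" into a Chebotarev condition in the ring class field $H$ of the order of discriminant $-8g-4$, which has degree $2 h(-8g-4)$ over $\mathbb{Q}$ (degree $h(-8g-4)$ over $\mathbb{Q}(\sqrt{-8g-4}) = \mathbb{Q}(\sqrt{-(2g+1)})$). A prime $p$ is represented by the principal form iff $p$ splits completely in $H$, which by Chebotarev has density $\tfrac{1}{2h(-8g-4)}$. That accounts for the factor $\tfrac{1}{2h(-8g-4)}$. The remaining factor $1 - \tfrac 1 g$ comes from an additional congruence/splitting constraint needed to guarantee \emph{absolute} simplicity — ruling out the finitely many "bad" primes and, more importantly, excluding a thin set of $p$ for which the constructed variety would acquire extra endomorphisms after base change (equivalently, for which the Weil number would generate a proper subfield or the algebra would fail to stay a division algebra over $\overline{\mathbb{F}_p}$). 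I expect this constraint to be a condition modulo $g$ — e.g. $p \not\equiv (\text{something}) \pmod g$, or $p$ avoiding a single residue class in a degree-$g$ subextension — contributing an independent density factor of $\tfrac{g-1}{g} = 1 - \tfrac 1 g$. Independence of the two conditions (one living in the ring class field of $\mathbb{Q}(\sqrt{-(2g+1)})$, the other in a $\mathbb{Z}/g$-extension ramified only at $g$) follows because the two fields are linearly disjoint over $\mathbb{Q}$, so the densities multiply.

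Assembling these: by the Chebotarev density theorem applied to the compositum of the ring class field $H$ and the auxiliary $\mathbb{Z}/g$-extension $L$, the set of primes $p$ splitting completely in $H$ and lying in the good classes modulo $g$ has density $\tfrac{1}{2h(-8g-4)} \cdot \tfrac{g-1}{g}$. Every such $p$ (away from the finitely many ramified ones, which do not affect the density) is realized as the characteristic of some absolutely simple polarized abelian variety with full automorphism group $C_{4g+2}$, by Theorem \ref{main thm}; hence $P_G$ contains this set and the stated lower bound on its density follows. It is only a lower bound because the construction of Theorem \ref{main thm} might be arranged to use only the principal class (or one particular genus) of quadratic forms, whereas other classes could conceivably also yield valid constructions; thus $P_G$ could in principle be larger. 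The main obstacle will be the precise bookkeeping of the local conditions at $p$ that ensure both that a suitable Weil $p^n$-number exists \emph{and} that the resulting division algebra remains a division algebra of full degree over the algebraic closure (absolute simplicity); making this robust enough to see that it is governed exactly by splitting in $H$ together with a mod-$g$ condition, with no hidden further constraints that would shrink the density, is the delicate part. Everything after that is a routine application of Chebotarev and linear disjointness.
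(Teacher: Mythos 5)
Your overall strategy coincides with the paper's: extract from the construction of Theorem \ref{main thm} a set of sufficient conditions on $p$, translate the representability condition $p=x^2+(2g+1)y^2$ into complete splitting in the ring class field $L$ of the order $\mathbb{Z}[\sqrt{-(2g+1)}]$ of discriminant $-8g-4$ (Theorem \ref{prime rep thm1}), apply Chebotarev to get the factor $\frac{1}{2\cdot h(-8g-4)}$, and conclude only a lower bound because $P_G$ may contain primes not produced by this particular construction. Up to that point your proposal matches Theorem \ref{asym thm} and Corollary \ref{asym cor}.

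The gap is in the second factor $1-\frac{1}{g}$. The additional condition imposed by the construction is not a condition modulo $g$ living in a degree-$g$ extension ramified only at $g$, and in the paper it is not what secures absolute simplicity (that comes from the Howe--Zhu criterion once $\pi$ is chosen; note also that the Weil number generates the imaginary quadratic field $K=\mathbb{Q}(\sqrt{-(2g+1)})$, not $\mathbb{Q}(\zeta_{4g+2})$, which enters only as a maximal subfield of the division algebra). The actual condition is (P2): $p\not\equiv 1 \pmod{2g+1}$, needed so that $p$ has residue degree $g$ in $\mathbb{Q}(\zeta_{2g+1})$, which gives $[L_{\mathfrak{P}}:K_{\mathfrak{p}}]=g$ and hence the embedding of $\mathbb{Q}(\zeta_{4g+2})$ into the degree-$g$ division algebra $D$ over $K$ — this is what produces the full group $C_{4g+2}$. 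Your density bookkeeping for this factor rests on linear disjointness of the two relevant fields, but the fields actually involved are \emph{not} linearly disjoint over $\mathbb{Q}$: $\mathbb{Q}(\zeta_{2g+1})$ contains $K$, which is also contained in $L$. Unconditionally the event $p\equiv 1 \pmod{2g+1}$ has density $\frac{1}{2g}$, so genuine independence with a mod-$(2g+1)$ condition would give the factor $1-\frac{1}{2g}$, not $1-\frac{1}{g}$; the correct factor arises precisely because (P1) already forces $p$ to split in $K$, i.e.\ Frobenius lies in the index-two subgroup of $(\mathbb{Z}/(2g+1))^{\times}$, so that conditionally on (P1) the excluded event has density $\frac{1}{g}$. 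The paper makes this precise via Lemma \ref{asym lem} (given (P1), $p\equiv 1\pmod{2g+1}$ if and only if $p$ splits completely in $L(\zeta_{2g+1})$) together with $[L(\zeta_{2g+1}):\mathbb{Q}]=2\cdot h(-8g-4)\cdot g$, and then subtracts the two Chebotarev densities. Your guessed auxiliary degree-$g$ extension happens to reproduce the right number, but it is not the condition the construction imposes, and the independence argument you give would fail for the actual condition; this step needs exactly the entanglement of the fields through $K$, not their disjointness.
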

To prove Theorem \ref{main theorem2}, we give a careful argument with the help of the Chebotarev theorem\cite[Theorem 3.1]{Lens}. For more details, see Theorem \ref{asym thm} and Corollary \ref{asym cor}. \\


This paper is organized as follows: Section~\ref{prelim} is devoted to several facts which are related to our desired construction. Explicitly, we will recall some facts about endomorphism algebras of simple abelian varieties ($\S$\ref{end alg av}), the theorem of Tate ($\S$\ref{thm Tate sec}), Honda-Tate theory ($\S$\ref{thm Honda}), a result of Waterhouse ($\S$\ref{thm waterhouse}), and the theory of binary quadratic forms ($\S$\ref{binaryquad}). In Section~\ref{main}, we obtain the desired construction using the facts that were introduced in the previous sections. Finally, in Section \ref{asym}, we give another result regarding the distribution of prime numbers that are related to our construction. \\
\indent In the sequel, let $g \geq 5$ be a Sophie Germain prime, and let $q=p^a$ for some prime number $p$ and an integer $a \geq 1,$ unless otherwise stated. Also, let $\overline{k}$ denote an algebraic closure of a field $k.$ Finally, for an integer $n \geq 1,$ we denote the cyclic group of order $n$ (resp.\ a primitive $n$-th root of unity and resp.\ the number of classes of primitive positive definite binary quadratic forms with discriminant $-n$) by $C_n$ (resp.\ $\zeta_{n}$ and resp.\ $h(-n)$).

\section{Preliminaries}\label{prelim}
In this section, we recall some of the facts in the general theory of abelian varieties over a field. Our main references are \cite{2} and \cite{8}.

\subsection{Endomorphism algebras of simple abelian varieties of odd prime dimension over finite fields}\label{end alg av}
In this section, we review all the possible endomorphism algebras of simple abelian varieties of odd prime dimension over finite fields. \\
\indent Let $X$ be a simple abelian variety of dimension $g$ over a finite field $k.$ Then it is well known that $\textrm{End}_k^0(X)$ is a division algebra over $\Q$ with $2g \leq \textrm{dim}_{\Q} \textrm{End}_k^0(X) < (2g)^2.$ Before giving our first result, we also recall Albert's classification. We choose a polarization $\lambda : X \rightarrow \widehat{X}$ where $\widehat{X}$ denotes the dual abelian variety of $X$. Using the polarization $\lambda,$ we can define an involution, called the \emph{Rosati involution}, $^{\vee}$ on $\textrm{End}_k^0(X).$ (For a more detailed discussion about the Rosati involution, see \cite[\S20]{8}.) In this way, to the pair $(X,\lambda)$ we associate the pair $(D, ^{\vee})$ with $D=\textrm{End}_k^0(X)$ and $^{\vee}$, the Rosati involution on $D$. We know that $D$ is a simple division algebra over $\Q$ of finite dimension and that $^{\vee}$ is a positive involution. Let $K$ be the center of $D$ so that $D$ is a central simple $K$-algebra, and let $K_0 = \{ x \in K~|~x^{\vee} = x \}$ be the subfield of symmetric elements in $K.$ By a theorem of Albert, $D$ (together with $^\vee$) is of one of the following four types: \\
\indent (i) Type I: $K_0 = K=D$ is a totally real field. \\
\indent (ii) Type II: $K_0 = K$ is a totally real field, and $D$ is a quaternion algebra over $K$ with $D\otimes_{K,\sigma} \R \cong M_2(\R)$ for every embedding $\sigma : K \hookrightarrow \R.$  \\
\indent (iii) Type III: $K_0 = K$ is a totally real field, and $D$ is a quaternion algebra over $K$ with $D\otimes_{K,\sigma} \R \cong \mathbb{H}$ for every embedding $\sigma : K \hookrightarrow \R$ (where $\mathbb{H}$ is the Hamiltonian quaternion algebra over $\R$). \\
\indent (iv) Type IV: $K_0 $ is a totally real field, $K$ is a totally imaginary quadratic field extension of $K_0$, and $D$ is a central simple algebra over $K$. \\
\indent Keeping the notations as above, we let
\begin{equation*}
e_0= [K_0 :\Q],~~~e=[K:\Q],~~~\textrm{and}~~~d=[D:K]^{\frac{1}{2}}.
\end{equation*}

As our last preliminary fact of this section, we impose some numerical restrictions on those values $e_0, e,$ and $d$ in the next table, following \cite[\S21]{8}.
\begin{center}
	\begin{tabular}{c c c}
		\hline
		& $\textrm{char}(k)=0$ & $\textrm{char}(k)=p>0$ \\
		\hline
		$\textrm{Type I}$ & $e|g$ & $e|g$  \\
		\hline
		$\textrm{Type II}$ & $2e|g$  & $2e|g$ \\
		\hline
		$\textrm{Type III}$ & $2e|g$ & $e|g$ \\
		\hline
		$\textrm{Type IV}$ & $e_0 d^2 |g$ &$ e_0 d |g $ \\
		\hline
	\end{tabular}
	\vskip 4pt
	\textnormal{Table 1 } \\
	\textnormal{Numerical restrictions on endomorphism algebras.}
\end{center}

Now, we are ready to introduce the following
\begin{lemma}\label{poss end alg}
	Let $X$ be a simple abelian variety of dimension $g$ over a finite field $k=\F_q$, and let $\lambda : X \rightarrow \widehat{X}$ be a polarization. Then $D:=\textrm{End}_k^0(X)$ (together with the Rosati involution $^\vee$ corresponding to $\lambda$) is of one of the following two types: \\
	(1) $D$ is a totally imaginary quadratic extension field of a totally real field of degree $g$; \\
	(2) $D$ is a central simple division algebra of degree $g$ over an imaginary quadratic field.
\end{lemma}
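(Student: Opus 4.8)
The plan is to work through Albert's four types in turn, using the numerical restrictions of Table~1 together with the constraint $2g \leq \dim_\Q D < (2g)^2$ that holds for a simple abelian variety of dimension $g$ over a finite field, and the fact that $g$ is an odd prime. Recall $\dim_\Q D = e_0 d^2$ in general (with $e = e_0$ or $e = 2e_0$ depending on the type), so the dimension bound reads $2g \le e_0 d^2 < 4g^2$.

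\textbf{Type I.} Here $D = K = K_0$ is totally real of degree $e = e_0$ over $\Q$, so $d = 1$ and $\dim_\Q D = e$. The restriction $e \mid g$ forces $e \in \{1, g\}$ since $g$ is prime. If $e = 1$ then $D = \Q$ and $\dim_\Q D = 1 < 2g$, contradicting the lower bound; if $e = g$ then $\dim_\Q D = g < 2g$, again a contradiction. So Type~I is impossible.

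\textbf{Types II and III.} In both cases $d = 2$ (quaternion algebra over $K = K_0$ totally real), so $\dim_\Q D = 4e_0$. For Type~II the restriction is $2e \mid g$, i.e.\ $2e_0 \mid g$; since $g$ is odd this is impossible (no even number divides an odd prime). For Type~III over characteristic $p > 0$ the restriction is $e \mid g$, so $e_0 \in \{1, g\}$. If $e_0 = 1$, then $\dim_\Q D = 4 < 2g$ (as $g \geq 5$), a contradiction; if $e_0 = g$, then $\dim_\Q D = 4g$, which does satisfy $2g \le 4g < 4g^2$, so the numerics alone do not rule it out. Here I would invoke the finite-field hypothesis: over a finite field, $D = \End_k^0(X)$ has center containing the $\Q$-algebra generated by the Frobenius $\pi$, and by the theorem of Tate / Honda--Tate the center $K$ of $D$ is $\Q(\pi)$, which is either $\Q$, a real quadratic field (only in special supersingular cases), or a CM field. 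In particular, for a simple abelian variety over a finite field, a quaternion division algebra over a totally real field of degree $g \geq 5$ cannot occur as $\End_k^0(X)$: the invariants of $D$ at the infinite places would have to be nontrivial (Type~III forces ramification at every real place), but the Honda--Tate description shows the only real-quadratic-type endomorphism algebras arising over finite fields are the quaternion algebra over $\Q$ attached to a supersingular elliptic curve, forcing $g = 1$. This disposes of Type~III for $g \geq 5$.

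\textbf{Type IV.} Now $K$ is a CM field (totally imaginary quadratic over the totally real $K_0$), $e = 2e_0$, and $\dim_\Q D = e_0 d^2$. Over characteristic $p > 0$ the restriction is $e_0 d \mid g$. Since $g$ is prime, $(e_0, d) \in \{(1,1), (1,g), (g,1)\}$. The case $(e_0,d) = (1,1)$ gives $D = K$ an imaginary quadratic field, $\dim_\Q D = 2 < 2g$, impossible. The case $(e_0, d) = (g, 1)$ gives $D = K$ a CM field of degree $2g$, $\dim_\Q D = 2g$, which is allowed and is outcome~(1). The case $(e_0, d) = (1, g)$ gives $D$ a central simple division algebra of degree $g$ over an imaginary quadratic field $K$, $\dim_\Q D = 2g^2$, which satisfies $2g \le 2g^2 < 4g^2$ and is outcome~(2). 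No other possibilities survive, which is exactly the dichotomy claimed.

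\textbf{Main obstacle.} The routine bookkeeping over the four types is elementary; the one genuinely non-formal point is eliminating Type~III in the case $e_0 = g$, where the numerical table alone permits a quaternion algebra over a totally real field of degree $g$. Ruling this out requires the structure of endomorphism algebras of simple abelian varieties \emph{over finite fields} specifically — via Tate's theorem and Honda--Tate theory (sections $\S$\ref{thm Tate sec} and $\S$\ref{thm Honda}) — to show the center of $D$ must be $\Q(\pi_X)$ and that the only way to get a totally-real center with nontrivial quaternionic invariants at all archimedean places is the supersingular elliptic curve case $g = 1$. I would cite the relevant classification (e.g.\ the discussion of endomorphism algebras over finite fields in \cite{2}) to close this gap cleanly rather than reprove Honda--Tate.
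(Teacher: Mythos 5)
The paper gives no in-text argument for this lemma --- it simply cites \cite[Lemma 2.1]{Hwang(2019)} --- and your type-by-type run through Albert's classification using Table~1, the bound $2g \le \dim_{\Q} D < (2g)^2$, and the finite-field fact that the center of $D$ is $\Q[\pi_X]$ is exactly the standard argument behind that citation, and it is correct. Two harmless slips worth fixing: in general $\dim_{\Q} D = e\,d^2$, so in Type~IV it is $2e_0 d^2$ (your actual case-by-case numbers already use this), and the clean way to kill Type~III with $e = g$ is that a totally real center $\Q[\pi_X]$ forces $\pi_X = \pm\sqrt{q}$, hence $[\Q[\pi_X]:\Q] \le 2$ --- the real-quadratic possibility yields a simple abelian surface ($g=2$), not only the supersingular elliptic curve $g=1$ as you state, but either way it cannot have degree $g \ge 5$.
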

\begin{proof}
	For a proof, see \cite[Lemma 2.1]{Hwang(2019)}.
\end{proof}

We also recall the following fact on the possible endomorphism algebras for the case when the base field is algebraically closed of positive characteristic:
\begin{remark}\label{oort rmk}
	Let $K$ be an algebraically closed field with $\textrm{char}(K)=p>0.$ Let $X$ be a simple abelian variety of dimension $g$ over $K$, and let $D=\textrm{End}_K^0(X)$. Then $D$ is of one of the following types (see \cite[$\S7$]{10}): \\
	(i) $D=\Q$;\\
	(ii) $D$ is a totally real field of degree $g$;\\
	(iii) $D=D_{p,\infty}$ if $g \geq 5$, where $D_{p,\infty}$ is the unique quaternion algebra over $\Q$ that is ramified at $p$ and $\infty$, and split at all other primes; \\
	(iv) $D$ is an imaginary quadratic field;\\
	(v) $D$ is a totally imaginary quadratic extension field of a totally real field of degree $g$; \\
	(vi) $D$ is a central simple division algebra of degree $g$ over an imaginary quadratic field and the $p$-rank of $X$ is $0.$
\end{remark}

\subsection{The theorem of Tate}\label{thm Tate sec}
In this section, we recall an important theorem of Tate, and give some interesting consequences of it. \\
\indent Let $k$ be a field and let $l$ be a prime number with $l \ne \textrm{char}(k)$. If $X$ is an abelian variety of dimension $g$ over $k,$ then we can introduce the Tate $l$-module $T_l X$ and the corresponding $\Q_l$-vector space $V_l X :=T_l X \otimes_{\Z_l} \Q_l.$ It is well known that $T_l X$ is a free $\Z_l$-module of rank $2g$ and $V_l X$ is a $2g$-dimensional $\Q_l$-vector space. In \cite{11}, Tate showed the following important result:

\begin{theorem}\label{thm Tate}
	Let $k$ be a finite field and let $\Gamma = \textrm{Gal}(\overline{k}/k).$ If $l$ is a prime number with $l \ne \textrm{char}(k),$ then we have: \\
	(a) For any abelian variety $X$ over $k,$ the representation
	\begin{equation*}
	\rho_l =\rho_{l,X} : \Gamma \rightarrow \textrm{GL}(V_l X)
	\end{equation*}
	is semisimple. \\
	(b) For any two abelian varieties $X$ and $Y$ over $k,$ the map
	\begin{equation*}
	\Z_l \otimes_{\Z} \textrm{Hom}_k(X,Y) \rightarrow \textrm{Hom}_{\Gamma}(T_l X, T_l Y)
	\end{equation*}
	is an isomorphism.
\end{theorem}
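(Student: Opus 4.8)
This is Tate's theorem \cite{11}, and I would follow the architecture of his proof.

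\emph{Reductions and the easy half.} Applying the statement to $X\times Y$ in place of $X$ and $Y$ and then decomposing, one reduces (b) to the case $Y=X$, i.e.\ to showing that $\Z_l\otimes_{\Z}\End_k(X)\to\End_{\Gamma}(T_l X)$ is an isomorphism. Injectivity is elementary: $\End_k(X)$ is a finite free $\Z$-module, a nonzero $k$-endomorphism cannot annihilate $X[l^n]$ for every $n$, so $\End_k(X)\hookrightarrow\End_{\Z_l}(T_l X)$, and torsion-freeness of the target yields injectivity after $\otimes\Z_l$. For surjectivity the essential content is the rational assertion that the image of $\End^0_k(X)\otimes_{\Q}\Q_l$ in $\End_{\Q_l}(V_l X)$ equals $\End_{\Gamma}(V_l X)$; the integral statement follows afterwards by a routine comparison of the two $\Z_l$-orders $\Z_l\otimes\End_k(X)\subseteq\End_{\Z_l[\Gamma]}(T_l X)$ inside it. Throughout one uses the nesting $\Q_l[\pi]\subseteq\End^0_k(X)\otimes\Q_l\subseteq\End_{\Gamma}(V_l X)$, where $\pi=\pi_X$ is the $q$-power Frobenius endomorphism: it is a $k$-endomorphism, it commutes with all of $\End_k(X)$, and it topologically generates the image of $\Gamma$.

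\emph{Part (a), semisimplicity.} This is the statement that $\pi$ acts semisimply on $V_l X$. I would deduce it from the Rosati involution $^{\vee}$ attached to a polarization: one has $\pi\pi^{\vee}=q$, so $\pi^{\vee}=q\pi^{-1}$ commutes with $\pi$, whence $\Q[\pi]$ is a commutative $\Q$-subalgebra of $\End^0_k(X)$ stable under $^{\vee}$. Since $^{\vee}$ is a positive involution, $\Q[\pi]\otimes_{\Q}\R$ inherits a positive involution and is therefore reduced; hence $\Q[\pi]$ is an étale $\Q$-algebra, the minimal polynomial of $\pi$ over $\Q$ is separable, and --- via the injectivity of the previous paragraph, which identifies this with the minimal polynomial of $\pi$ as an operator on $V_l X$ --- $\pi$ is semisimple. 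In particular every $\Gamma$-stable $\Q_l$-subspace of $V_l X$ has a $\Gamma$-stable complement.

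\emph{Surjectivity and the main obstacle.} Given $\alpha\in\End_{\Gamma}(V_l X)$, form its graph $W_{\alpha}=\{(v,\alpha v):v\in V_l X\}$, a $\Gamma$-stable subspace of $V_l(X^2)=V_l X\oplus V_l X$ with $\Gamma$-stable complement $0\oplus V_l X$; the projector onto $W_{\alpha}$, viewed in $M_2(\End(V_l X))=\End(V_l(X^2))$, has $(2,1)$-entry $\alpha$. So it suffices to prove the following \emph{Key Lemma}: for every abelian variety $Z$ over $k$ and every $\Gamma$-stable $W\subseteq V_l Z$, there is $u\in\End^0_k(Z)\otimes\Q_l$ with $u(V_l Z)=W$; together with part (a) this yields the projector onto $W$ along any $\Gamma$-stable complement by a standard argument with semisimple algebras, which is what the graph construction needs. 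To prove the Key Lemma one scales so that $M:=W\cap T_l Z$ is a saturated $\Gamma$-stable $\Z_l$-submodule of $T_l Z$; its image $G_n\subseteq Z[l^n]$ is a finite $\Gamma$-stable subgroup scheme, the quotient isogeny $\phi_n\colon Z\to Z_n:=Z/G_n$ is defined over $k$, and under $\phi_n$ the Tate module $T_l Z_n$ is identified with the lattice $T_l Z+l^{-n}M$ in $V_l Z$. One then invokes the finiteness of the set of isomorphism classes of abelian varieties over the fixed finite field $k$ that are isogenous to $Z$ --- which follows, for example, from Zarhin's trick together with the finite-typeness over $\Z$ of the moduli stack of principally polarized abelian varieties of bounded dimension, and is essentially the heart of Tate's own argument. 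Hence some isomorphism class occurs among the $Z_n$ for infinitely many $n$; comparing such isomorphisms with the isogenies $\phi_n$ and their complementary isogenies, and organizing the comparison $l$-adically, produces an element of $\End_k(Z)\otimes\Z_l$ whose image on $V_l Z$ is exactly $M\otimes\Q_l=W$. I expect the hard part to be precisely this Key Lemma: both the finiteness input and the bookkeeping required to pass from repeated isomorphisms $Z_m\cong Z_n$ to a single endomorphism that reproduces $W$ on the nose rather than merely a lattice commensurable with $M$. Everything else --- the reductions, the injectivity, part (a), and the graph trick --- is formal.
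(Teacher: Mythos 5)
The paper does not actually prove this statement: it is quoted as background, with the proof delegated to Tate's original article \cite{11} (the same reference used for Corollary \ref{cor TateEnd0}). Your proposal therefore supplies what the paper omits, and the architecture you sketch is precisely Tate's: reduction of (b) to $Y=X$ via $X\times Y$, the elementary injectivity statement, the Key Lemma that every $\Gamma$-stable subspace $W\subseteq V_l Z$ equals $u(V_lZ)$ for some $u\in\End_k^0(Z)\otimes_{\Q}\Q_l$ (proved through the quotients $Z/G_n$ and the finiteness of isomorphism classes within an isogeny class over the fixed finite field), and the graph trick. Two of your substitutions differ from Tate's original write-up but are legitimate: you import the finiteness input via Zarhin's trick and finiteness of the principally polarized moduli, where Tate worked with polarizations of bounded degree on the quotients, and you prove (a) from positivity of the Rosati involution ($\pi^{\vee}=q\pi^{-1}$, so $\Q[\pi]$ is $\vee$-stable, hence reduced, hence the minimal polynomial of $\pi$ is separable), whereas Tate deduces semisimplicity from the Key Lemma itself (the idempotent $e$ with $eV_l=W$ supplies the $\Gamma$-stable complement $(1-e)V_l$); both routes are standard. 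The one step you state too loosely is the final extraction of $\alpha$: asking that the projector onto $W_\alpha$ along the $\Gamma$-stable complement $0\oplus V_lX$ lie in $M_2\bigl(\End_k^0(X)\otimes\Q_l\bigr)$ is literally equivalent to $\alpha\in\End_k^0(X)\otimes\Q_l$, so it cannot serve as the intermediate step; the standard completion (Tate's) is: write $u=\left(\begin{smallmatrix} a & b\\ c & d\end{smallmatrix}\right)$ with image equal to the graph of $\alpha$, deduce $c=\alpha a$, $d=\alpha b$ and that $(v,w)\mapsto av+bw$ is surjective onto $V_lX$, then use semisimplicity of $E_l:=\End_k^0(X)\otimes\Q_l$ (the right ideal $aE_l+bE_l$ is generated by an idempotent acting as the identity on $V_lX$, hence equals $E_l$) to find $x,y\in E_l$ with $ax+by=1$, so that $\alpha=cx+dy\in E_l$. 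With that bookkeeping made explicit, your outline is a correct account of the proof of the cited theorem.
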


Now, we recall that an abelian variety $X$ over a (finite) field $k$ is called \emph{elementary} if $X$ is $k$-isogenous to a power of a simple abelian variety over $k.$ Then, as an interesting consequence of Theorem \ref{thm Tate}, we have the following
\begin{corollary}\label{cor TateEnd0}
	Let $X$ be an abelian variety of dimension $g$ over a finite field $k.$ Then we have:\\
	(a) The center $Z$ of $\textrm{End}_k^0(X)$ is the subalgebra $\Q[\pi_X].$ In particular, $X$ is elementary if and only if $\Q[\pi_X]=\Q(\pi_X)$ is a field, and this occurs if and only if $f_X$ is a power of an irreducible polynomial in $\Q[t]$ where $f_X$ denotes the characteristic polynomial of $\pi_X.$ \\
	(b) Suppose that $X$ is elementary. Let $h=f_{\Q}^{\pi_X}$ be the minimal polynomial of $\pi_X$ over $\Q$. Further, let $d=[\textrm{End}_k^0(X):\Q(\pi_X)]^{\frac{1}{2}}$ and $e=[\Q(\pi_X):\Q].$ Then $de =2g$ and $f_X = h^d.$ \\
	(c) We have $2g \leq \textrm{dim}_{\Q} \textrm{End}^0_k (X) \leq (2g)^2$ and $X$ is of CM-type. \\
	(d) The following conditions are equivalent: \\
	\indent (d-1) $\textrm{dim}_{\Q} \textrm{End}_k^0(X)=2g$; \\
	\indent (d-2) $\textrm{End}_k^0(X)=\Q[\pi_X]$; \\
	\indent (d-3) $\textrm{End}_k^0(X)$ is commutative; \\
	\indent (d-4) $f_X$ has no multiple root. \\
	(e) The following conditions are equivalent: \\
	\indent (e-1) $\textrm{dim}_{\Q} \textrm{End}_k^0(X)=(2g)^2$; \\
	\indent (e-2) $\Q[\pi_X]=\Q$; \\
	\indent (e-3) $f_X$ is a power of a linear polynomial; \\
	\indent (e-4) $\textrm{End}^0_k(X) \cong M_g(D_{p,\infty})$ where $D_{p,\infty}$ is the unique quaternion algebra over $\Q$ that is ramified at $p$ and $\infty$, and split at all other primes; \\
	\indent (e-5) $X$ is supersingular with $\textrm{End}_k(X) = \textrm{End}_{\overline{k}}(X_{\overline{k}})$ where $X_{\overline{k}}=X \times_k \overline{k}$; \\
	\indent (e-6) $X$ is isogenous to $E^g$ for a supersingular elliptic curve $E$ over $k$ all of whose endomorphisms are defined over $k.$
\end{corollary}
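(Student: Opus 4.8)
The whole package is read off from Theorem~\ref{thm Tate} by analysing $V_lX$ as a module over the algebra generated by Frobenius. The key point is that $\Gamma=\Gal(\kbar/k)$ is topologically generated by the $q$-power Frobenius, whose image under $\rho_{l,X}$ is the Frobenius endomorphism $\pi_X$ acting on $V_lX$; hence $\End_\Gamma(V_lX)$ is exactly the centralizer $C_l$ of $\pi_X$ in $\End_{\Q_l}(V_lX)$, and Theorem~\ref{thm Tate}(b) furnishes an isomorphism $\End_k^0(X)\otimes_\Q\Q_l\cong C_l$ carrying $\Q[\pi_X]\otimes_\Q\Q_l=\Q_l[\pi_X]$ onto the subalgebra $\Q_l[\pi_X]\subseteq C_l$. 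For \textbf{(a)}, Theorem~\ref{thm Tate}(a) makes $\pi_X$ act semisimply on $V_lX$, so its minimal polynomial over $\Q_l$ is separable, $\Q_l[\pi_X]=\prod_jL_j$ is a product of fields, and writing $V_lX=\bigoplus_jL_j^{\oplus n_j}$ as a $\Q_l[\pi_X]$-module gives $C_l=\prod_jM_{n_j}(L_j)$, whose center is $\prod_jL_j=\Q_l[\pi_X]$. As this holds for every $l\neq p$, the center $Z$ of $\End_k^0(X)$ satisfies $Z\otimes_\Q\Q_l=\Q_l[\pi_X]$ for all such $l$, so $Z=\Q[\pi_X]$. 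Decomposing $X$ up to isogeny as $\prod_iX_i^{r_i}$ with $X_i$ pairwise non-isogenous simple, $\End_k^0(X)=\prod_iM_{r_i}(\End_k^0(X_i))$ has center $\prod_iZ_i$ with each $Z_i$ a field, so $Z$ is a field iff there is a single factor, i.e.\ iff $X$ is elementary; and writing $\Q[\pi_X]=\Q[t]/(h)$ with $h$ the minimal polynomial of $\pi_X$ (which has the same irreducible factors as $f_X$), $Z$ is a field iff $h$ is irreducible iff $f_X$ is a power of an irreducible, which is also exactly when $\Q[\pi_X]=\Q(\pi_X)$.

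For \textbf{(b)} I would reduce to the simple case via $X\sim Y^r$, $\End_k^0(X)=M_r(\End_k^0(Y))$, $V_lX\cong(V_lY)^{\oplus r}$ and $\dim Y=g/r$; for $Y$ simple, $\End_k^0(Y)$ is a central division algebra of index $d$ over $K=\Q(\pi_Y)$ and $V_lY$ is free of rank $d$ over $K\otimes_\Q\Q_l$, so comparing $\Q_l$-dimensions gives $de=2\dim Y$, and the characteristic polynomial of $\pi_Y$ on a free rank-$d$ module over $K\otimes_\Q\Q_l$ equals $h^d$; reassembling yields $de=2g$ and $f_X=h^d$. Then \textbf{(c)} follows by bookkeeping: applying (b) to each $X_i$ gives $\dim_\Q\End_k^0(X)=\sum_ir_i^2d_i^2e_i$ with $d_ie_i=2g_i$ and $\sum_ir_ig_i=g$, whence $\sum_ir_i^2d_i^2e_i\ge\sum_ir_id_ie_i=2g$ and $\sum_ir_i^2d_i^2e_i\le\sum_i(r_id_ie_i)^2\le\bigl(\sum_ir_id_ie_i\bigr)^2=(2g)^2$; moreover each division algebra $\End_k^0(X_i)$ contains a maximal subfield $L_i$ of degree $d_i$ over $K_i$, so $[L_i:\Q]=d_ie_i=2g_i$, and the block-diagonal copy of $\prod_iL_i^{\oplus r_i}$ inside $\prod_iM_{r_i}(\End_k^0(X_i))=\End_k^0(X)$ is a commutative semisimple subalgebra of $\Q$-dimension $\sum_i2r_ig_i=2g$, i.e.\ $X$ is of CM-type.

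Parts \textbf{(d)} and \textbf{(e)} are obtained by tracking the equality cases in (c). Since $r_i^2d_i^2e_i\ge r_id_ie_i$ with equality iff $r_i=d_i=1$, the condition $\dim_\Q\End_k^0(X)=2g$ is equivalent to $r_i=d_i=1$ for all $i$, i.e.\ to $\End_k^0(X)=\prod_i\Q(\pi_{X_i})$, which is commutative, equals $\Q[\pi_X]$ by (a), and forces $f_X=\prod_ih_i$ (distinct irreducibles) to be squarefree; conversely $f_X$ squarefree forces $r_id_i=1$ in $f_X=\prod_ih_i^{\,r_id_i}$ (using $f_{X_i}=h_i^{d_i}$ from (b)) hence (d-1), "$\End_k^0(X)$ commutative" gives $\End_k^0(X)=Z=\Q[\pi_X]$ by (a), and "$\End_k^0(X)=\Q[\pi_X]$" gives $\dim_\Q\End_k^0(X)=\deg h\le 2g$, which with (c) forces equality; this settles (d). For (e), $(2g)^2=\bigl(\sum_ir_id_ie_i\bigr)^2\ge\sum_i(r_id_ie_i)^2\ge\sum_ir_i^2d_i^2e_i=\dim_\Q\End_k^0(X)$ with equality throughout iff there is a single factor and $e_1=1$, i.e.\ iff $\Q(\pi_X)=\Q$; this gives (e-1)$\Leftrightarrow$(e-2). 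When $\Q[\pi_X]=\Q$, $\pi_X$ is a rational Weil $q$-number and an algebraic integer, hence $\pi_X=\pm\sqrt q\in\Z$ and $q$ is a perfect square, so $f_X=(t\mp\sqrt q)^{2g}$ (giving (e-3)); all slopes of the Newton polygon are $\tfrac12$, so $X$ is supersingular and $X_{\overline{k}}\sim E_0^{\,g}$ for a supersingular elliptic curve $E_0$, whence $\End_{\overline{k}}^0(X_{\overline{k}})=M_g(D_{p,\infty})$, of $\Q$-dimension $(2g)^2$, and therefore $\End_k^0(X)=\End_{\overline{k}}^0(X_{\overline{k}})=M_g(D_{p,\infty})$ (giving (e-4) and, after passing to the simple factor $E$, $X\sim E^g$ with $E$ supersingular, i.e.\ (e-6)); finally, once $\End_k^0(X)=\End_{\overline{k}}^0(X_{\overline{k}})$ the Galois action on $\End_{\overline{k}}(X_{\overline{k}})$ (which sits inside $\End_{\overline{k}}^0(X_{\overline{k}})$) is trivial, so $\End_k(X)=\End_{\overline{k}}(X_{\overline{k}})$, which is (e-5).

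The step I expect to be the real obstacle is the freeness claim used in (b): that $V_lX$ is free of rank exactly $d$ over $\Q(\pi_X)\otimes_\Q\Q_l$, rather than merely a direct sum of modules $M_{n_v}(L_v)$ with a priori unequal multiplicities $n_v$. This goes beyond formal Wedderburn theory; it is the statement that the local Hasse invariants of $\End_k^0(X)$ at the places above $l$ all coincide, equivalently that every irreducible factor of $f_X$ over $\Q_l$ occurs with the same multiplicity $d$ --- genuinely part of the content of Tate's theorem. I would extract it from the proof of Theorem~\ref{thm Tate} in \cite{11}, or simply cite \cite{11} or \cite{8} for this local input; everything else is bookkeeping with the elementary inequalities $\sum a_i^2\le\bigl(\sum a_i\bigr)^2$ and $n^2\ge n$, together with part (a).
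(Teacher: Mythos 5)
Your plan is essentially sound, but note that the paper does not argue this corollary at all: its ``proof'' is the single citation to Tate's Theorem~2 in \cite{11}, so your derivation from Theorem~\ref{thm Tate} supplies an argument where the paper only records a reference. Two comments on the substance. First, the step you flag as the real obstacle in (b) is already within reach of the tools you use, and needs no input about Hasse invariants at places above $l$: once (a) is proved, for $X$ elementary the characteristic polynomial $f_X$ (which has $\Q$-coefficients, independent of $l$) must be a power $h^{m}$ of the irreducible $h$, and semisimplicity of the Frobenius action then forces $V_lX\cong\bigoplus_{v\mid l}K_v^{\oplus m}$ with the \emph{same} multiplicity $m$ at every place $v$ of $K=\Q(\pi_X)$; comparing the $\Q_l$-dimension of the commutant $M_m(K\otimes_\Q\Q_l)$ with $\dim_\Q\End_k^0(X)=d^2e$ via Theorem~\ref{thm Tate}(b) gives $m=d$, hence $f_X=h^d$ and $de=2g$. (Citing \cite{11} or \cite{2} for this is of course also fine --- it is exactly what the paper does for the whole statement.) Second, some loose ends to tie: in (e) you only run implications out of (e-1)/(e-2), so the easy returns (e-3)$\Rightarrow$(e-2), (e-5)$\Rightarrow$(e-1) and (e-6)$\Rightarrow$(e-1) should at least be stated; and your route from (e-2) to supersingularity leans on the Manin--Oort theorem that slope $\tfrac12$ everywhere implies geometric isogeny to a power of a supersingular elliptic curve, whereas one can stay inside the paper's toolkit by observing that the simple factor has endomorphism algebra a division algebra with center $\Q$ carrying a positive involution, hence by Albert's classification a definite quaternion algebra, identified as $D_{p,\infty}$ by Proposition~\ref{local inv}; this gives $g_1=1$, $r=g$, and (e-4), (e-6) directly, after which your argument for (e-5) via Galois-invariance of geometric endomorphisms goes through unchanged.
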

\begin{proof}
	For a proof, see \cite[Theorem 2]{11}.
\end{proof}

For a precise description of the structure of the endomorphism algebra of a simple abelian variety $X$, viewed as a simple algebra over its center $\Q[\pi_X]$, we record the following useful result:
\begin{proposition}\label{local inv}
	Let $X$ be a simple abelian variety over a finite field $k=\F_q.$ Let $K=\Q[\pi_X].$ Then we have: \\
	(a) If $\nu$ is a place of $K$, then the local invariant of $\textrm{End}_k^0(X)$ in the Brauer group $\textrm{Br}(K_{\nu})$ is given by
	\begin{equation*}
	\textrm{inv}_{\nu}(\textrm{End}_k^0(X))=\begin{cases} 0 & \mbox{if $\nu$ is a finite place not above $p$}; \\ \frac{\textrm{ord}_{\nu}(\pi_X)}{\textrm{ord}_{\nu}(q)} \cdot [K_{\nu}:\Q_p] & \mbox{if $\nu$ is a place above $p$}; \\ \frac{1}{2} & \mbox{if $\nu$ is a real place of $K$}; \\ 0 & \mbox{if $\nu$ is a complex place of $K$}. \end{cases}
	\end{equation*}
	(b) If $d$ is the degree of the division algebra $D:=\textrm{End}_k^0(X)$ over its center $K$ (so that $d=[D:K]^{\frac{1}{2}}$ and $f_X = (f_{\Q}^{\pi_X})^d$), then $d$ is the least common denominator of the local invariants $\textrm{inv}_{\nu}(D).$
\end{proposition}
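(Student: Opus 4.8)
The plan is to realize $D := \textrm{End}_k^0(X)$ as a central simple algebra over the field $K = \Q[\pi_X]$ --- which is legitimate by Corollary~\ref{cor TateEnd0}(a), since $X$ is simple, hence elementary, so that $\Q[\pi_X] = \Q(\pi_X)$ is a field, $f_X = (f_{\Q}^{\pi_X})^d$ and $de = 2g$ --- and then to compute its class in $\textrm{Br}(K)$ one place at a time, using the exactness of $0 \to \textrm{Br}(K) \to \bigoplus_{\nu} \textrm{Br}(K_{\nu}) \to \Q/\Z \to 0$ from class field theory (the last arrow being $\sum_{\nu} \textrm{inv}_{\nu}$). There are four kinds of places to treat: finite places $\nu$ not lying over $p$, complex places, real places, and places lying over $p$. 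Granting (a), part (b) is then formal: the least common multiple of the denominators of the numbers $\textrm{inv}_{\nu}(D)$ is the index of $D$ in $\textrm{Br}(K)$, which equals its degree $d = [D:K]^{1/2}$ precisely because $D$ is a division algebra, while $f_X = (f_{\Q}^{\pi_X})^d$ by Corollary~\ref{cor TateEnd0}(b).

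For a finite place $\nu$ above a rational prime $l \neq p$, I would argue straight from Theorem~\ref{thm Tate}. Part (b) of that theorem gives $D \otimes_{\Q} \Q_l \cong \textrm{End}_{\Q_l[\Gamma]}(V_l X)$, which equals $\textrm{End}_{\Q_l[\pi_X]}(V_l X)$ because $\Gamma$ acts on $V_l X$ through the Frobenius $\pi_X$; and part (a) says that $\pi_X$ acts semisimply on $V_l X$. The characteristic polynomial of $\pi_X$ on $V_l X$ is $f_X = (f_{\Q}^{\pi_X})^d$, while its minimal polynomial over $\Q$ is the separable irreducible polynomial $f_{\Q}^{\pi_X}$; semisimplicity then forces $V_l X$ to be free of rank $d$ over $\Q_l[\pi_X] = K \otimes_{\Q} \Q_l = \prod_{\nu \mid l} K_{\nu}$, whence $D \otimes_{\Q} \Q_l \cong \prod_{\nu \mid l} M_d(K_{\nu})$; thus every $D \otimes_K K_{\nu}$ is split and $\textrm{inv}_{\nu}(D) = 0$. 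For the archimedean places, I would first recall that $\pi_X$ is a Weil $q$-number, so that every conjugate of $\pi_X$ has complex absolute value $\sqrt{q}$ and $\pi_X\,\overline{\pi_X} = q$; consequently $K$ is either a CM field, in which case all its archimedean places are complex, or one of $\Q$, $\Q(\sqrt{q})$, in which case they are all real. At a complex place, $D \otimes_K K_{\nu}$ is a central simple algebra over $\C$, hence split, and $\textrm{inv}_{\nu}(D) = 0$. At a real place $\nu$, the Rosati involution restricts to a positive involution on the degree-$d$ central simple $\R$-algebra $D \otimes_K \R$, and one must show that this algebra is not split, i.e. $\textrm{inv}_{\nu}(D) = \tfrac{1}{2}$.

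The heart of (a) is the behaviour at the places above $p$, which I would treat by Dieudonn\'e theory. Writing $q = p^a$, the geometric Frobenius $\pi_X$ is the $a$-th power of the absolute Frobenius acting on the isocrystal (rational Dieudonn\'e module) $N$ attached to $X$; the algebra $D \otimes_{\Q} \Q_p = \prod_{\nu \mid p} D \otimes_K K_{\nu}$ acts on $N$, and the factor indexed by $\nu$ is the endomorphism algebra of the $\nu$-isotypic summand $N_{\nu}$ of $N$. By the Dieudonn\'e--Manin classification, $N_{\nu}$ is isoclinic; matching the action of $\pi_X$ identifies its slope with $\textrm{ord}_{\nu}(\pi_X)/\textrm{ord}_{\nu}(q)$, and Manin's formula for the invariant of the endomorphism algebra of an isoclinic isocrystal over $K_{\nu}$ then gives $\textrm{inv}_{\nu}(D) = (\textrm{ord}_{\nu}(\pi_X)/\textrm{ord}_{\nu}(q)) \cdot [K_{\nu} : \Q_p]$ in $\Q/\Z$.

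I expect two points to need care. The main obstacle is the $p$-adic step: one must get the normalization exactly right --- the power $a$ relating the geometric Frobenius to the isocrystal Frobenius, the factor $[K_{\nu} : \Q_p]$, and the contravariant-versus-covariant convention for Dieudonn\'e modules --- so that the slopes $\textrm{ord}_{\nu}(\pi_X)/\textrm{ord}_{\nu}(q)$ genuinely lie in $[0,1]$ and the resulting invariants are mutually consistent. The second is the value at the real places: positivity of the Rosati involution by itself does not separate the non-split $\R$-algebra from the split one $M_d(\R)$, so I would complete that step by feeding the $p$-adic invariants, together with the already-established vanishing of $\textrm{inv}_{\nu}(D)$ at every finite $\nu \nmid p$ and every complex $\nu$, into the product formula $\sum_{\nu} \textrm{inv}_{\nu}(D) = 0$; since $\textrm{Gal}(\overline{\Q}/\Q)$ permutes the real places of $K$ transitively, those places carry a common invariant lying in $\{0, \tfrac{1}{2}\}$, and the product formula forces that value to be $\tfrac{1}{2}$. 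This completes (a), and (b) follows as indicated.
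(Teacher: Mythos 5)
The paper gives no argument of its own here (it just cites \cite[Cor.~16.30 and 16.32]{2}), so your sketch has to stand on its own; and indeed the $l$-adic step via Theorem~\ref{thm Tate}, the complex places, the Dieudonn\'e--Manin computation above $p$, and the deduction of (b) from ``exponent $=$ index $=$ degree'' for a division algebra over a number field are the standard route and are fine in outline. The genuine gap is at the real places. Real places of $K=\Q[\pi_X]$ occur exactly when $\pi_X=\pm\sqrt{q}$, and there are two subcases. If $q$ is a square, then $K=\Q$ and the invariant above $p$ is $\frac{a/2}{a}=\frac12$, so your product-formula argument does close the case. But if $q$ is not a square, then $K=\Q(\sqrt{p})$ has \emph{two} real places, $p$ ramifies in $K$, and the unique $\nu\mid p$ has $[K_\nu:\Q_p]=2$ and $\mathrm{ord}_\nu(\pi_X)/\mathrm{ord}_\nu(q)=\frac12$, so $\mathrm{inv}_\nu(D)\equiv 0$; all other finite and complex invariants vanish, and the product formula only says the two real invariants sum to an integer --- which is satisfied both by $(0,0)$ and by $(\tfrac12,\tfrac12)$. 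The auxiliary claim that Galois transitivity of the real places forces the two invariants to agree is also unjustified as stated: $D$ is only a central simple $K$-algebra, and quaternion algebras over a real quadratic field ramified at exactly one infinite place (and one finite place) do exist; equality of the archimedean invariants would follow only if you already knew that $[D]$ is restricted from $\mathrm{Br}(\Q)$, which is essentially the point to be proved. (You are right, by the way, that positivity of the Rosati involution cannot decide this, since $M_d(\R)$ also carries a positive involution.)

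The standard way to close the gap --- and in effect the argument behind the quoted references --- is to base change to $k'=\F_{q^2}$: there the Frobenius is $\pi_X^2=q\in\Q$, so $X_{k'}$ falls under the equivalences in Corollary~\ref{cor TateEnd0}(e) and $\mathrm{End}_{k'}^0(X_{k'})\cong M_n(D_{p,\infty})$ for some $n$. Since an endomorphism of $X_{k'}$ is defined over $k$ exactly when it commutes with $\pi_X$, the algebra $D=\mathrm{End}_k^0(X)$ is the centralizer of the subfield $K=\Q(\pi_X)$ in this central simple $\Q$-algebra, so the double centralizer theorem gives $[D]=[D_{p,\infty}\otimes_\Q K]$ in $\mathrm{Br}(K)$; restricting the invariant $\tfrac12$ of $D_{p,\infty}$ at the infinite place of $\Q$ along the two real places of $K$ (local degree $1$) yields $\mathrm{inv}_\nu(D)=\tfrac12$ at each real place, and restricting at $p$ (local degree $2$) reconfirms the value $0$ there. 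With that case repaired, your treatment of (a) is complete, and (b) goes through as you say, provided you make explicit that ``least common denominator of the invariants $=$ order of $[D]$ in $\mathrm{Br}(K)$'' and that order $=$ index is the Albert--Brauer--Hasse--Noether theorem, while index $=$ degree uses that $D$ is division. Note finally that the problematic case never arises in the body of the paper (there $K$ is imaginary quadratic), but the proposition as stated covers it, so the proof must too.
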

\begin{proof}
	(a) For a proof, see \cite[Corollary 16.30]{2}. \\
	(b) For a proof, see \cite[Corollary 16.32]{2}.
\end{proof}

\subsection{Abelian varieties up to isogeny and Weil numbers: Honda-Tate theory}\label{thm Honda}
In this section, we recall an important theorem of Honda and Tate. To achieve our goal, we first give the following
\begin{definition}\label{qWeil Def}
	(a) A \emph{$q$-Weil number} is an algebraic integer $\pi$ such that $| \iota(\pi) | = \sqrt{q}$ for all embeddings $\iota : \Q[\pi] \hookrightarrow \C.$ \\
	(b) Two $q$-Weil numbers $\pi$ and $\pi^{\prime}$ are said to be \emph{conjugate} if they have the same minimal polynomial over $\Q,$ or equivalently, there is an isomorphism $\Q[\pi] \rightarrow \Q[\pi^{\prime}]$ sending $\pi$ to $\pi^{\prime}.$
\end{definition}

Regarding $q$-Weil numbers, the following facts are well-known:
\begin{remark}\label{qWeil rem}
	Let $X$ and $Y$ be simple abelian varieties over a finite field $k=\F_q.$ Then we have \\
	(1) The Frobenius endomorphism $\pi_X$ is a $q$-Weil number. \\
	(2) $X$ and $Y$ are $k$-isogenous if and only if $\pi_X$ and $\pi_Y$ are conjugate.
\end{remark}

Now, we introduce our main result of this section:

\begin{theorem}\label{thm HondaTata}
	For every $q$-Weil number $\pi$, there exists a simple abelian variety $X$ over $\F_q$ such that $\pi_X$ is conjugate to $\pi$. Moreover, we have a bijection between the set of isogeny classes of simple abelian varieties over $\F_q$ and the set of conjugacy classes of $q$-Weil numbers given by $X \mapsto \pi_X$.
\end{theorem}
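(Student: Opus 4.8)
The plan is to treat the two directions of the claimed bijection separately; almost all of the ``easy'' direction is already contained in the facts recalled above, so the real content lies in surjectivity, which is Honda's contribution. For well-definedness and injectivity: by Remark~\ref{qWeil rem}(1) the Frobenius $\pi_X$ of a simple abelian variety $X/\mathbb{F}_q$ is a $q$-Weil number, and by Remark~\ref{qWeil rem}(2) two simple abelian varieties over $\mathbb{F}_q$ are $\mathbb{F}_q$-isogenous precisely when their Frobenius endomorphisms are conjugate. This last equivalence rests on Theorem~\ref{thm Tate}(b): an $\mathbb{F}_q$-isogeny is the same as a $\Gamma$-equivariant isomorphism of $\ell$-adic Tate modules, and for a simple $X$ the $\Gamma$-representation on $V_\ell X$ is determined up to isomorphism by the characteristic polynomial of Frobenius, by the semisimplicity in Theorem~\ref{thm Tate}(a). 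Hence $X \mapsto \pi_X$ induces a well-defined injection from isogeny classes of simple abelian varieties over $\mathbb{F}_q$ to conjugacy classes of $q$-Weil numbers.

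\textbf{Surjectivity.} Fix a $q$-Weil number $\pi$, put $K = \mathbb{Q}(\pi)$, and let $h \in \mathbb{Q}[t]$ be its minimal polynomial. I would first reduce to producing \emph{some} abelian variety $X/\mathbb{F}_q$, not a priori simple, with $f_X$ a power of $h$: by Corollary~\ref{cor TateEnd0}(a) such an $X$ is elementary, hence $\mathbb{F}_q$-isogenous to $B^r$ for a simple $B$, and then $\pi_B$ is a root of $h$, i.e.\ conjugate to $\pi$. The construction of $X$ proceeds through complex multiplication in characteristic zero. Apart from the cases where $\pi$ is real — namely $\pi = \pm\sqrt{q}$ with $K = \mathbb{Q}$ or a real quadratic field, which one handles by hand via supersingular elliptic curves and their CM lifts, invoking Corollary~\ref{cor TateEnd0}(e) — the field $K$ is a CM field. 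Then I would choose a CM type $\Phi$ on $K$, take an abelian variety $A$ over a number field $L$ with complex multiplication by $\mathcal{O}_K$ of type $\Phi$, enlarge $L$ so that $A$ has good reduction at a prime $\mathfrak{P} \mid p$, and let $A_0$ be its reduction over the residue field $\mathbb{F}_{p^f}$. The Shimura--Taniyama formula computes the valuations of the resulting Frobenius $\pi_{A_0} \in \mathcal{O}_K$: for every prime $\mathfrak{q}$ of $K$ above $p$,
\[
\frac{\operatorname{ord}_{\mathfrak{q}}(\pi_{A_0})}{\operatorname{ord}_{\mathfrak{q}}(p^f)} = \frac{\#\{\varphi \in \Phi : \varphi \text{ induces } \mathfrak{q}\}}{\#\{\varphi : K \hookrightarrow \overline{\mathbb{Q}}_p,\ \varphi \text{ induces } \mathfrak{q}\}} ,
\]
and, by Proposition~\ref{local inv}(a), these same ratios govern the local invariants, hence the isogeny type, of the abelian variety we wish to build.

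The heart of the matter — and the step I expect to be the main obstacle — is a purely combinatorial one: since $\pi\overline{\pi} = q$, the target valuations satisfy $\operatorname{ord}_{\mathfrak{q}}(\pi) + \operatorname{ord}_{\overline{\mathfrak{q}}}(\pi) = \operatorname{ord}_{\mathfrak{q}}(q)$, and one must choose the CM type $\Phi$ (after possibly replacing $q$ by a power $q^m$ and $A_0$ by a corresponding base change, so that the residue field is $\mathbb{F}_{q^m}$) so that $\pi_{A_0}$ becomes conjugate to $\pi^m$; this is Honda's lemma, and it requires a delicate analysis of how a CM type distributes over the $p$-adic places of $K$. Granting it, I would conclude by descent. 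Take the Weil restriction $B := \operatorname{Res}_{\mathbb{F}_{q^m}/\mathbb{F}_q}(A_0)$, an abelian variety over $\mathbb{F}_q$; since $V_\ell(B)$ is the induced Galois representation, the eigenvalues of its Frobenius are exactly the $m$-th roots of the eigenvalues of $\pi_{A_0}$ on $V_\ell(A_0)$, and $\pi^m$ is among the latter because $\pi_{A_0}$ is conjugate to $\pi^m$. Hence $\pi$ is a root of the characteristic polynomial $f_B$, so the irreducible $h$ divides $f_B$; decomposing $B$ up to isogeny into isotypic factors, the factor attached to $h$ is — by the reduction carried out in the previous paragraph — a simple abelian variety over $\mathbb{F}_q$ whose Frobenius is conjugate to $\pi$. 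Together with the injectivity above, this establishes the asserted bijection.
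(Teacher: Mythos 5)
The paper does not actually prove this statement: it is the Honda--Tate theorem, and the ``proof'' is a citation of Honda's paper and of \cite[\S16.5]{2}. Your outline reproduces the strategy of that cited proof (Tate's Theorem~\ref{thm Tate} for well-definedness and injectivity, CM lifting, the Shimura--Taniyama formula, and Weil restriction for surjectivity), and the easy half of your argument is fine. But judged as a proof it has a genuine gap, and you name it yourself: the entire content of surjectivity is the step you call ``Honda's lemma,'' which you then simply grant. Nothing in the proposal constructs the CM datum whose reduction has the prescribed Frobenius, and without that step the argument proves nothing beyond what Theorem~\ref{thm Tate} already gives.

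Moreover, as you set it up the step is over the wrong field, so it would fail even if carried out. You propose to take an abelian variety with CM by $\mathcal{O}_K$, $K=\mathbb{Q}(\pi)$, and to choose a CM type $\Phi$ of $K$ whose Shimura--Taniyama ratios match $\operatorname{ord}_{\mathfrak q}(\pi)/\operatorname{ord}_{\mathfrak q}(q)$. For a variety with CM by $K$ the left-hand side of the Shimura--Taniyama formula is an integer count of embeddings inducing $\mathfrak q$ divided by $[K_{\mathfrak q}:\mathbb{Q}_p]$, whereas the quantities $[K_{\mathfrak q}:\mathbb{Q}_p]\cdot\operatorname{ord}_{\mathfrak q}(\pi)/\operatorname{ord}_{\mathfrak q}(q)$ need not be integers: by Proposition~\ref{local inv} they are exactly the local invariants of the division algebra $D$ attached to $\pi$, and when its degree $d$ over $K$ exceeds $1$ (as happens for the Weil numbers used in Theorem~\ref{main thm}, where $d=g$) no CM type of $K$ can realize them; equivalently, the simple variety in question has dimension $g$ with $2g=d\,[K:\mathbb{Q}]$, too large to admit CM by $K$. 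Honda's argument instead chooses an auxiliary CM field $L\supseteq K$ with $[L:\mathbb{Q}]=2g$ together with a CM type of $L$ whose valuations match those of $\pi$, reduces the corresponding CM abelian variety, and then uses the Weil-number property and Kronecker's theorem to show the reduced Frobenius equals a power $\pi^{N}$ up to a root of unity, which a further base extension absorbs; only at that point does your (correct) Weil-restriction descent, i.e.\ the identity $f_B(t)=f_{A_0}(t^m)$ giving $f_B(\pi)=0$ and the isotypic-factor argument via Corollary~\ref{cor TateEnd0}, apply. So the skeleton is the right one, but the proposal neither proves the central existence-of-CM-type lemma nor formulates it over the correct field, and these are precisely the hard points of the theorem.
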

The inverse of the map $X \mapsto \pi_X$ associates to a $q$-Weil number $\pi$ a simple abelian variety $X$ over $\F_q$ such that $f_X$ is a power of the minimal polynomial $f_{\Q}^{\pi}$ of $\pi$ over $\Q.$
\begin{proof}
	For a proof, see \cite[Main Theorem]{4} or \cite[\S16.5]{2}.
\end{proof}

\subsection{Isomorphism classes contained in an isogeny class}\label{thm waterhouse}
In this section, we will give a useful result of Waterhouse~\cite{12}. Throughout this section, let $k=\F_q.$ \\
\indent Let $X$ be an abelian variety over $k.$ Then $\textrm{End}_k(X)$ is a $\Z$-order in $\textrm{End}_k^0(X)$ containing $\pi_X$ and $q/\pi_X.$ If a ring is the endomorphism ring of an abelian variety, then we may consider a left ideal of the ring, and give the following
\begin{definition}\label{Roccur}
	Let $X$ be an abelian variety over $k$ with $R:=\textrm{End}_k(X),$ and let $I$ be a left ideal of $R$, which contains an isogeny. \\
	(a) We define $H(X,I)$ to be the intersection of the kernels of all elements of $I$. This is a finite subgroup scheme of $X.$ \\
	(b) We define $X_I$ to be the quotient of $X$ by $H(X,I)$ i.e.\ $X_I=X/H(X,I).$ This is an abelian variety over $k$ that is $k$-isogenous to $X.$
\end{definition}

Now, we introduce our main result of this section, which plays an important role:
\begin{proposition}\label{endposs}
	Let $X$ be an abelian variety over $k$ with $R:=\textrm{End}_k(X)$, and let $I$ be a left ideal of $R$, which contains an isogeny. Also, let $D=\textrm{End}_k^0(X).$ Then we have: \\
	(a) $\textrm{End}_k(X_I)$ contains $O_r (I):=\{x \in \textrm{End}_k^0(X)~|~I x \subseteq I\}$, the right order of $I,$ and equals it if $I$ is a kernel ideal\footnote{Recall that $I$ is a kernel ideal of $R$ if $I=\{\alpha \in R~|~\alpha H(X,I)=0 \}$.}. \\
	(b) Every maximal order in $D$ occurs as the endomorphism ring of an abelian variety in the isogeny class of $X.$
\end{proposition}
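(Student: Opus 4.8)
The plan is to follow Waterhouse closely and treat the two parts separately, with part (a) supplying the technical machinery that makes part (b) nearly immediate. For part (a), I would begin by recalling that since $I$ contains an isogeny, the finite group scheme $H(X,I)$ is honest (finite, killed by some $n \geq 1$), so $X_I = X/H(X,I)$ makes sense as an abelian variety $k$-isogenous to $X$ via the canonical projection $\varphi \colon X \to X_I$. The first step is to show $O_r(I) \subseteq \End_k(X_I)$: given $x \in O_r(I)$, write $x = \beta/m$ with $\beta \in R$ and $m \in \Z_{>0}$, and observe that $Ix \subseteq I$ forces $\beta$ to carry $H(X,I)$ into $H(X,I)$ (because $H(X,I) = \bigcap_{\alpha \in I} \ker \alpha$, and if $P \in H(X,I)$ then for every $\alpha \in I$ we have $\alpha x$, suitably interpreted, kills $P$ — the point is that the elements of $Ix$ still annihilate $H(X,I)$ since $Ix \subseteq I$). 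Hence $\beta$ descends to an endomorphism of $X_I$, and dividing by $m$ gives $x \in \End_k^0(X_I) \cap (\text{integral elements}) = \End_k(X_I)$; one checks $x$ is actually integral because it stabilizes the finitely generated $\Z$-module $T_\ell X_I$ for all $\ell \neq p$ (this is where one invokes Tate's theorem, Theorem \ref{thm Tate}(b), to identify $\End_k(X_I)$ with the endomorphisms of the Tate modules commuting with Galois).

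The reverse inclusion, and the identification $\End_k(X_I) = O_r(I)$ when $I$ is a kernel ideal, is the substantive part. Here I would argue as follows. Any $\psi \in \End_k(X_I)$ pulls back along $\varphi$ to give an element of $\End_k^0(X)$ (using that $\varphi$ is an isogeny, so $\varphi^{-1} \psi \varphi \in \End_k^0(X)$); call it $x$. One must show $Ix \subseteq I$. For $\alpha \in I$, the composite $\alpha x$ is an endomorphism of $X$ (a priori in $\End_k^0(X)$) that annihilates $H(X,I)$ — this is because $x$ comes from $X_I$, on which $H(X,I)$ has already been collapsed, so $\alpha x$ factors through $\varphi$. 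By the kernel-ideal hypothesis, $I = \{\alpha \in R \mid \alpha H(X,I) = 0\}$, so $\alpha x \in I$, giving $Ix \subseteq I$, i.e. $x \in O_r(I)$. Combining with the first inclusion yields equality. I expect the main obstacle to be the careful bookkeeping of when a fractional element of $\End_k^0(X)$ is genuinely integral versus merely rational, and making precise the statement ``$\alpha x$ annihilates $H(X,I)$'' at the level of group schemes rather than just points; this is exactly the kind of place where one leans on Tate (Theorem \ref{thm Tate}) to pass to Tate modules where everything becomes a clean statement about lattices.

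For part (b), the idea is that maximal orders are their own right orders and arise as kernel ideals. Let $M$ be a maximal order in $D = \End_k^0(X)$ and put $R = \End_k(X)$. Since $R$ is an order in $D$, there is $n \geq 1$ with $nM \subseteq R$; set $I = nM$, a left ideal of $R$ (indeed $RI = R \cdot nM \subseteq M \cdot nM \cdot \text{(stuff)}$ — more carefully, $I$ is a left $R$-module because $R \subseteq M$ and $M \cdot nM = nM$), and $I$ contains the isogeny $n \cdot \mathrm{id}_X$. One then checks $O_r(I) = \{x \in D \mid nMx \subseteq nM\} = \{x \mid Mx \subseteq M\} = M$, using maximality of $M$ (a maximal order equals its own associated order on either side). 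It remains to see that $I$ can be taken to be a kernel ideal, or alternatively to invoke the standard fact from Waterhouse that for any left ideal $J$ containing an isogeny there is a kernel ideal $\widetilde{J}$ with $O_r(\widetilde{J}) \supseteq O_r(J)$ and with $X_{\widetilde{J}} = X_J$; applying this to $I$ and combining with part (a) gives $\End_k(X_{\widetilde I}) \supseteq O_r(\widetilde I) \supseteq O_r(I) = M$, while $\End_k(X_{\widetilde I})$ is an order in $D$ containing the maximal order $M$, hence equals $M$. Thus $X_{\widetilde I}$ is an abelian variety in the isogeny class of $X$ with endomorphism ring exactly $M$, as desired. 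For both parts I would simply cite \cite{12} for the details that are purely formal, and emphasize only the conceptual skeleton above.
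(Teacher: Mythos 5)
Your sketch of part (a) is essentially Waterhouse's own argument and is sound: the inclusion $O_r(I)\subseteq \End_k(X_I)$ via descent of $\beta=mx$ through $X\to X_I$, and the converse under the kernel-ideal hypothesis via $\alpha x=\beta\psi\varphi\in R$ killing $H(X,I)$, is exactly how \cite[Prop.~3.9]{12} (equivalently \cite[Lemma 16.56]{2}) runs; the paper itself only cites these sources. One small caveat: integrality of $x$ on $X_I$ cannot be checked on the $\ell$-adic Tate modules with $\ell\neq p$ alone; you also need the $p$-part (the $p$-divisible group, or Dieudonn\'e module), since elements such as $\pi/p$-type quotients stabilize every $T_\ell$ with $\ell\neq p$ without being endomorphisms. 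Tate's theorem as quoted in the paper only covers $\ell\neq p$, so this needs the standard supplement at $p$.

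The genuine gap is in part (b). You set $I=nM$ and justify that $I$ is a left ideal of $R$ ``because $R\subseteq M$.'' But an arbitrary maximal order $M$ of $D$ need not contain the given order $R=\End_k(X)$: in a noncommutative division algebra there are many maximal orders, and $R$ sits inside \emph{some} of them, not all. Your argument therefore only produces the maximal orders containing $R$, whereas the statement (and its use in Theorem \ref{main thm}, where $D$ is a central division algebra of degree $g$ over an imaginary quadratic field, so genuinely noncommutative) requires \emph{every} maximal order. The repair is short: take the lattice $RM$ (the $\Z$-span of products $rm$), choose $n\geq 1$ with $nRM\subseteq R$, and set $I=nRM$. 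This is a left ideal of $R$ containing the isogeny $n$, and its right order contains $M$, hence equals $M$ by maximality. Part (a) (containment only) gives $\End_k(X_I)\supseteq O_r(I)=M$, and since $\End_k(X_I)$ is an order of $D$ containing the maximal order $M$, equality follows. Note this also makes your detour through kernel ideals unnecessary: the unconditional containment in (a) plus maximality of $M$ already forces equality.
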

\begin{proof}
	(a) For a proof, see \cite[Lemma 16.56]{2} or \cite[Proposition 3.9]{12}. \\
	(b) For a proof, see \cite[Theorem 3.13]{12}.
\end{proof}

\subsection{Representability of integers by binary quadratic forms}\label{binaryquad}
In this section, we recall one interesting fact on the representability of integers by binary quadratic forms, which will be used later in this paper.  
\begin{proposition}\label{prop_quad_1}
	An integer $m$ is properly represented by some binary quadratic form $aX^2 + bXY + cY^2$ with $\gcd(a,b,c) = 1$ of
	discriminant $d := b^2 - 4ac$ if and only if $d$ is a square mod $4m$.
\end{proposition}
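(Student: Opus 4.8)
The plan is to reduce everything to the classical observation that a primitive binary quadratic form $f$ properly represents $m$ if and only if $f$ is \emph{properly equivalent} (i.e.\ equivalent under $\SL_2(\Z)$) to a form whose leading coefficient is $m$; once this is in hand the proposition becomes a matter of comparing discriminants. This is the standard Gauss-style argument (one convenient reference is Cox, \emph{Primes of the form $x^2+ny^2$}, Lemma~2.5), and I would simply reproduce it.

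For the ``only if'' direction, suppose $m=ax_0^2+bx_0y_0+cy_0^2$ with $\gcd(x_0,y_0)=1$. By B\'ezout there exist $x_1,y_1\in\Z$ with $x_0y_1-x_1y_0=1$, so $\gamma=\left(\begin{smallmatrix} x_0 & x_1\\ y_0 & y_1\end{smallmatrix}\right)\in\SL_2(\Z)$. The substitution $(X,Y)\mapsto(x_0X+x_1Y,\ y_0X+y_1Y)$ carries $f=aX^2+bXY+cY^2$ to a properly equivalent form $f'=mX^2+b'XY+c'Y^2$ whose leading coefficient is $f(x_0,y_0)=m$, and $f'$ is again primitive of discriminant $d$. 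Since $\disc f'=(b')^2-4mc'$, we get $d\equiv(b')^2\pmod{4m}$, so $d$ is a square modulo $4m$.

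For the ``if'' direction, choose $b\in\Z$ with $b^2\equiv d\pmod{4m}$ and set $c:=(b^2-d)/(4m)\in\Z$. Then $f:=mX^2+bXY+cY^2$ has discriminant exactly $b^2-4mc=d$ and properly represents $m$ via $(X,Y)=(1,0)$. The one point requiring genuine care — and the step I expect to be the main obstacle — is ensuring $f$ is primitive, i.e.\ $\gcd(m,b,c)=1$. A short check shows that any prime $\ell$ dividing $\gcd(m,b,c)$ must satisfy $\ell^2\mid d$ (treating the prime $2$ separately), so one removes such an $\ell$ by exploiting the freedom to replace $b$ by $b+2mt$, which preserves the congruence $b^2\equiv d\pmod{4m}$, together with a prime-by-prime (CRT/Hensel) adjustment; the $2$-adic bookkeeping here is the only delicate part. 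In the setting where we invoke the proposition the integer $m$ will be a prime $p$ with $p^2\nmid d$, so the only candidate $\ell$ is $p$ itself and $\ell^2\mid d$ is impossible; hence primitivity is automatic and the congruence $b^2\equiv d\pmod{4p}$ already produces the desired primitive form.
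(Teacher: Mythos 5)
Your ``only if'' direction is the standard argument and is complete: completing $(x_0,y_0)$ to an $\SL_2(\Z)$-matrix produces an equivalent form $\langle m,b',c'\rangle$ of the same discriminant and content, whence $d\equiv (b')^2\pmod{4m}$. (The paper itself offers no argument, only a citation to Granville, so the comparison is really about whether your converse closes.)

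The converse is where there is a genuine gap, and it is not just ``delicate bookkeeping''. Your proposed repair of primitivity cannot work: for a prime $\ell\mid m$ the substitution $b\mapsto b+2mt$ changes neither $b$ nor $c=(b^2-d)/(4m)$ modulo $\ell$ (indeed $c(t)=c(0)+bt+mt^2\equiv c(0)\pmod{\ell}$ once $\ell\mid b$), so this freedom can never remove $\ell$ from $\gcd(m,b,c)$; and no choice of square root can rescue the statement in general, because with the clause $\gcd(a,b,c)=1$ the proposition as written is false. Concretely, take $m=2$, $d=4$: then $d\equiv 2^2\pmod{8}$, but any primitive form of discriminant $4$ properly representing $2$ would be $\SL_2(\Z)$-equivalent to some $\langle 2,b,c\rangle$ with $b^2-8c=4$, which forces $b$ even and then $c$ even, contradicting primitivity (the same happens for $m=4$, $d=4$). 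So the correct general statement either drops the primitivity requirement (in which case your naive construction $\langle m,b,(b^2-d)/(4m)\rangle$ finishes immediately), or needs a hypothesis such as: no prime $\ell\mid m$ has $\ell^2\mid d$, which, by your own observation that $\ell\mid\gcd(m,b,c)$ forces $\ell^2\mid d$ (this works uniformly, including $\ell=2$), makes primitivity automatic. Your closing remark is the right one for this paper: in the application $m=p$ is an odd prime with $p\nmid 2g+1$ and $d=-4(2g+1)$, so $p^2\nmid d$ and your argument does prove the case actually used; but as a proof of the proposition as stated it does not go through.
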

\begin{proof}
	For a proof, see \cite[Proposition 12.3.1]{Granv(2019)}.
\end{proof}
In Section \ref{main} below, we will consider the case when $n=p$ is a prime and $a=1,b=0, c=2g+1$ for a Sophie Germain prime $g \geq 5.$ For this reason, we record the following fact:

\begin{theorem}\label{prime rep thm0}
	Let $n>0$ be an integer. Then there exists a monic irreducible polynomial $f_n(t) \in \Z[t]$ of degree $h(-4n)$ such that if an odd prime $p$ divides neither $n$ nor the discriminant of $f_n(t)$, then $p=x^2+ny^2$ for some $x,y \in \Z$ if and only if $\left( \frac{-n}{p} \right)=1$ and $f_n(t) \equiv 0~(\textrm{mod}~p)$ has an integer solution.
\end{theorem}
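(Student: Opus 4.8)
The plan is to realize this as a classical application of class field theory for the ring class field of the order $\mathbb{Z}[\sqrt{-n}]$, following the template of Cox's treatment of primes of the form $x^2+ny^2$. First I would recall that $p=x^2+ny^2$ for some $x,y\in\mathbb{Z}$ if and only if the principal form $X^2+nY^2$ of discriminant $-4n$ properly represents $p$; by Proposition \ref{prop_quad_1} (with $a=1,b=0,c=n$) any prime not dividing $n$ with $\left(\frac{-n}{p}\right)=1$ is at least properly represented by \emph{some} primitive form of discriminant $-4n$, so the content of the theorem is to pin down, among all such forms, exactly the principal class, and to do so by a congruence condition on $p$. The order $\mathcal{O}=\mathbb{Z}[\sqrt{-n}]$ has discriminant $-4n$ and conductor $f$ with $f^2\mid 4n$; let $L$ be the ring class field of $\mathcal{O}$, a finite abelian extension of $K=\mathbb{Q}(\sqrt{-n})$ which is Galois over $\mathbb{Q}$ with $\operatorname{Gal}(L/K)$ isomorphic to the form class group of discriminant $-4n$, hence of order $h(-4n)$.

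Next I would invoke the standard dictionary: for an odd prime $p$ not dividing $n$ (so unramified in $K$ and not dividing the conductor), $p$ splits completely in $L$ if and only if $p=x^2+ny^2$. This is because $p$ splits in $L/\mathbb{Q}$ precisely when $p$ splits in $K$ (i.e.\ $\left(\frac{-n}{p}\right)=1$) \emph{and} the primes above $p$ are principal ideals generated by an element of $\mathcal{O}$, which unwinds to $p$ being represented by the principal form $X^2+nY^2$. Now choose $f_n(t)\in\mathbb{Z}[t]$ to be the minimal polynomial over $\mathbb{Q}$ of a primitive element $\alpha$ for $L/K$ that is an algebraic integer — concretely one may take $\alpha$ a suitable generator so that $L=K(\alpha)$; then $[L:K]=h(-4n)$ gives $\deg f_n = h(-4n)$, and $f_n$ is monic irreducible in $\mathbb{Z}[t]$. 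For primes $p$ not dividing $n$ and not dividing $\operatorname{disc}(f_n)$ (the latter forces $p$ unramified in $L$ and $\mathbb{Z}[\alpha]$ to be $p$-maximal), the factorization of $f_n \bmod p$ mirrors the splitting of $p$ in $L/K$: in particular $f_n(t)\equiv 0 \pmod p$ has a root in $\mathbb{Z}/p$ if and only if $L/K$ has a degree-one prime above some prime of $K$ over $p$, which combined with $\left(\frac{-n}{p}\right)=1$ is equivalent to $p$ splitting completely in $L$. Putting the two equivalences together yields: $p=x^2+ny^2 \iff \left(\frac{-n}{p}\right)=1$ and $f_n$ has a root mod $p$.

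The main obstacle is the compatibility of the two "degree-one prime" conditions at the level of $K$ versus $\mathbb{Q}$. When $\left(\frac{-n}{p}\right)=1$, $p\mathcal{O}_K=\mathfrak{p}\overline{\mathfrak{p}}$ with residue degree one, and one must check that $f_n$ having an integer root mod $p$ is genuinely equivalent to $\mathfrak{p}$ (equivalently $\overline{\mathfrak{p}}$) splitting completely in $L$ — not merely having \emph{some} degree-one prime above it — so that complete splitting of $p$ in the full extension $L/\mathbb{Q}$ is correctly detected. This is where one uses that $L/\mathbb{Q}$ is Galois: the decomposition behavior at $\mathfrak{p}$ and at $\overline{\mathfrak{p}}$ are conjugate, so a single degree-one factor of $f_n \bmod p$ already forces complete splitting. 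A secondary technical point is the passage from $\mathcal{O}_K$ to the non-maximal order $\mathcal{O}$ and the corresponding identification of the ring class group with the form class group of discriminant $-4n$; both are classical (Cox, \S7), but the hypotheses "$p\nmid n$" and "$p\nmid\operatorname{disc}(f_n)$" are exactly what is needed to stay away from the conductor and the ramified primes so that the idelic/ideal-theoretic dictionary applies cleanly. I would carry out the argument in that order: set up $\mathcal{O}$, $K$, $L$; state the splitting criterion $p=x^2+ny^2\iff p$ splits completely in $L$; choose $f_n$ as an integral primitive element for $L/K$; translate complete splitting into the root-mod-$p$ condition via Dedekind's theorem on $f_n \bmod p$ together with the known splitting of $p$ in $K$; and finally assemble the stated biconditional.
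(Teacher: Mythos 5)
The paper offers no argument of its own here (it simply cites Cox, Theorem 9.2), and your outline is essentially that proof: identify representability by $X^2+nY^2$ with complete splitting in the ring class field $L$ of $\mathbb{Z}[\sqrt{-n}]$ (the paper's Theorem \ref{prime rep thm1}), take $f_n$ to be the minimal polynomial of an integral primitive element, translate splitting into the root-mod-$p$ condition via Kummer--Dedekind, and use that $L/\mathbb{Q}$ is Galois to promote a single unramified degree-one prime to complete splitting. That last point, which you singled out as the main obstacle, you handle correctly.

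The one genuine gap is in the construction of $f_n$ itself. You take $\alpha$ an algebraic integer with $L=K(\alpha)$ and assert that $[L:K]=h(-4n)$ gives $\deg f_n=h(-4n)$, where $f_n$ is the minimal polynomial of $\alpha$ over $\mathbb{Q}$. That inference fails for a generic generator: typically $\mathbb{Q}(\alpha)=L$, so the minimal polynomial over $\mathbb{Q}$ has degree $2h(-4n)$, while the degree-$h(-4n)$ minimal polynomial over $K$ lies in $\mathcal{O}_K[t]$ rather than $\mathbb{Z}[t]$. The missing idea (and the reason Cox's statement specifies a \emph{real} algebraic integer) is to choose $\alpha$ real: since $L$ is Galois over $\mathbb{Q}$ and contains the imaginary field $K$, complex conjugation is a nontrivial element of $\operatorname{Gal}(L/\mathbb{Q})$ restricting nontrivially to $K$, so $[L:L\cap\mathbb{R}]=2$, $K\cdot(L\cap\mathbb{R})=L$, and $[L\cap\mathbb{R}:\mathbb{Q}]=h(-4n)$. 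Taking $\alpha$ an integral primitive element of $L\cap\mathbb{R}$ over $\mathbb{Q}$ yields $L=K(\alpha)$ and $\deg_{\mathbb{Q}}\alpha=h(-4n)$, and a degree count then shows the minimal polynomials of $\alpha$ over $\mathbb{Q}$ and over $K$ coincide --- which is also what legitimizes your later step of reading the splitting of a prime $\mathfrak{p}$ of $K$ in $L/K$ off the factorization of the $\mathbb{Z}$-coefficient polynomial $f_n$ modulo $p$. With that choice made explicit, the rest of your argument goes through as written.
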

\begin{proof}
	For a proof, see \cite[Theorem 9.2]{Cox(1989)}.
\end{proof}

Theorem \ref{prime rep thm0} can also be stated in terms of the splitting behavior of primes in a Galois extension of an imaginary quadratic field as in the next theorem, which plays an important role in Section \ref{asym}. 
\begin{theorem}\label{prime rep thm1}
	Let $n >0$ be an integer and let $L$ be the ring class field of the order $\Z[\sqrt{-n}]$ in $K:=\Q(\sqrt{-n}).$ If $p$ is an odd prime not dividing $n,$ then $p=x^2 + ny^2$ for some $x,y \in \Z$ if and only if $p$ splits completely in $L.$
\end{theorem}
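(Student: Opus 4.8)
The plan is to deduce the theorem from the class field theory of the ring class field $L$ of the order $\cO := \Z[\sqrt{-n}]$ in $K = \Q(\sqrt{-n})$, as developed in \cite[\S7--\S9]{Cox(1989)}. Let $f$ be the conductor of $\cO$ in $\cO_K$, so that $\cO = \Z + f\cO_K$ and $\disc(\cO) = f^2 d_K = -4n$; since $p$ is odd and $p \nmid n$ we have $p \nmid 4n$, hence $p \nmid f$ and $p$ is unramified in $K$. First I would dispose of the non-split case: if $\left(\frac{-n}{p}\right) = -1$, then $p$ is inert in $K$, so it does not split completely in $K \subseteq L$, and moreover $p = x^2 + ny^2$ is impossible (a solution would force $p \mid y$, whence $p^2 \mid p$, or else $\left(\frac{-n}{p}\right) = 1$). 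So one may assume $\left(\frac{-n}{p}\right) = 1$, i.e.\ $p\cO_K = \fp\,\overline{\fp}$ with $\fp \ne \overline{\fp}$ and both ideals prime to $f$.

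Next I would bring in the two structural facts about $L$. First, $L/\Q$ is Galois (indeed generalized dihedral), so for $p$ splitting in $K$ the statement ``$p$ splits completely in $L$'' is equivalent to ``$\fp$ splits completely in $L$'': if $\fp$ splits completely, so does $\overline{\fp}$ upon applying complex conjugation $\in \Gal(L/\Q)$, and the converse is trivial. Second, the Artin map induces an isomorphism $I_K(f)/P_{K,\Z}(f) \cong \Gal(L/K)$ (the left side being the proper ideal class group of $\cO$) under which the class of a prime $\fp$ of $\cO_K$ prime to $f$ is trivial exactly when $\fp$ splits completely in $L$, and this class is trivial exactly when $\fp = (\alpha)$ for some $\alpha \in \cO_K$ congruent modulo $f\cO_K$ to an integer coprime to $f$. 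Thus the theorem reduces to showing that $\fp \in P_{K,\Z}(f)$ if and only if $p = x^2 + ny^2$.

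That last equivalence I would establish directly using $\cO = \Z + f\cO_K$ and $f\cO_K \cap \Z = f\Z$. For the ``if'' direction: given $p = x^2 + ny^2$, the element $\alpha = x + y\sqrt{-n} \in \cO$ satisfies $\Nm(\alpha) = p$, so $(\alpha)$ has prime norm $p$ and hence equals $\fp$ or $\overline{\fp}$; writing $\alpha \equiv a \pmod{f\cO_K}$ with $a \in \Z$ (possible since $\alpha \in \cO$), the relation $p = \Nm(\alpha) \equiv a^2 \pmod f$ together with $p \nmid f$ forces $\gcd(a,f) = 1$, so $(\alpha) \in P_{K,\Z}(f)$. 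For the ``only if'' direction: if $\fp = (\alpha)$ with $\alpha \equiv a \pmod{f\cO_K}$ and $a \in \Z$, then $\alpha \in a + f\cO_K \subseteq \Z + f\cO_K = \cO = \Z[\sqrt{-n}]$, so $\alpha = x + y\sqrt{-n}$ for integers $x,y$, and $p = \Nm(\fp) = \Nm(\alpha) = x^2 + ny^2$.

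The only genuinely heavy input is the ring class field machinery of the second paragraph — the identification of $\Gal(L/K)$ with $I_K(f)/P_{K,\Z}(f)$ via the Artin map, together with the induced splitting criterion — which I would simply cite from Cox; everything else is bookkeeping with the conductor. The step requiring the most care is the translation between the splitting behavior of $p$ in $L/\Q$ and that of $\fp$ in $L/K$, which is precisely where the Galois property of $L/\Q$ and the role of complex conjugation enter.
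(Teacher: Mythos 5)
Your proposal is correct; note that the paper does not prove this statement at all but simply cites \cite[Theorem 9.4]{Cox(1989)}, and your argument is essentially the standard proof given there: reduce to the split case, pass between splitting of $p$ in $L/\Q$ and of $\fp$ in $L/K$ using that $L/\Q$ is (generalized dihedral) Galois, invoke the Artin isomorphism $I_K(f)/P_{K,\Z}(f)\cong\Gal(L/K)$, and translate $\fp\in P_{K,\Z}(f)$ into $p=x^2+ny^2$ via $\cO=\Z+f\cO_K$. The bookkeeping steps you spell out (coprimality of $a$ to $f$ from $p\equiv a^2\pmod f$, and that a prime ideal lying in $P_{K,\Z}(f)$ admits a generator congruent to an integer coprime to $f$) are exactly the points handled in Cox's treatment, so the proof is complete as outlined.
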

\begin{proof}
	For a proof, see \cite[Theorem 9.4]{Cox(1989)}.
\end{proof}

\section{Main Result}\label{main}
In this section, we will construct an absolutely simple polarized abelian variety of a Sophie Germain dimension over a finite field whose automorphism group is a cyclic group of maximal possible order to obtain one of our main results. (We recall that a simple abelian variety over a field $k$ is \emph{absolutely simple (or geometrically simple)} if it is simple over $\overline{k}.$) \\
\indent For such a construction, we first need the following

\begin{lemma}\label{lem_p_1_2g+1}
	For each Sophie Germain prime $g \geq 5,$ there exists a prime $p$ such that $p = x^2 + (2g+1)y^2$ for some $x,y \in \mathbb{Z}$ with $p \ne 2g+1$ and $p \not\equiv 1\pmod{2g+1}$.  
\end{lemma}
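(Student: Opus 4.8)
The plan is to exhibit, for each Sophie Germain prime $g \ge 5$, a prime $p$ lying in the principal genus of the form $x^2 + (2g+1)y^2$ that avoids the two excluded residue conditions ($p \ne 2g+1$ and $p \not\equiv 1 \pmod{2g+1}$). The natural tool is Theorem~\ref{prime rep thm0} (or equivalently Theorem~\ref{prime rep thm1}) applied with $n = 2g+1$: there is a monic irreducible $f_n(t)\in\Z[t]$ of degree $h(-4(2g+1)) = h(-8g-4)$ such that, for all but finitely many primes $p$, the representability $p = x^2+(2g+1)y^2$ is equivalent to $\left(\frac{-(2g+1)}{p}\right)=1$ together with $f_n$ having a root mod $p$. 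Since $g$ is a Sophie Germain prime, $2g+1$ is prime, which keeps the arithmetic of $K = \Q(\sqrt{-(2g+1)})$ clean (in particular $-(2g+1) \equiv 1 \pmod 4$ is false when $g$ is odd, so $-(2g+1) \equiv 3 \pmod 4$, and $\disc K = -4(2g+1)$, so the order $\Z[\sqrt{-(2g+1)}]$ is the maximal order and $L$ is the genuine Hilbert class field of $K$).

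First I would set up the relevant number field $L/\Q$: let $L$ be the ring class field (= Hilbert class field) of $\Z[\sqrt{-(2g+1)}]$, which is Galois over $\Q$ with group a generalized dihedral group of order $2h(-8g-4)$. By Theorem~\ref{prime rep thm1}, $p = x^2+(2g+1)y^2$ for odd $p \nmid (2g+1)$ iff $p$ splits completely in $L$. Next I would adjoin the cyclotomic data that encodes the two forbidden conditions: consider the compositum $M = L \cdot \Q(\zeta_{2g+1})$. The condition $p \equiv 1 \pmod{2g+1}$ says precisely that $p$ splits completely in $\Q(\zeta_{2g+1})$, which has degree $2g$ over $\Q$. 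The condition $p = 2g+1$ is a single prime, harmless to exclude. So it suffices to produce a prime $p$ that splits completely in $L$ but does \emph{not} split completely in $\Q(\zeta_{2g+1})$ — equivalently, a prime whose Frobenius in $\Gal(M/\Q)$ restricts to the identity on $L$ but to a nontrivial element on $\Q(\zeta_{2g+1})$.

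The key step is then a Chebotarev / counting argument: the set of primes splitting completely in $L$ has density $\frac{1}{[L:\Q]} = \frac{1}{2h(-8g-4)} > 0$, and among these the ones that \emph{also} split completely in $\Q(\zeta_{2g+1})$ split completely in $M$, a set of density $\frac{1}{[M:\Q]}$. The crucial point is that $L \ne M$, i.e. $\Q(\zeta_{2g+1}) \not\subseteq L$: indeed $\Q(\zeta_{2g+1})$ contains the cyclic subfield $\Q(\sqrt{\pm(2g+1)})$ of degree $2$, and any quadratic subfield of $L$ must (since $L/K$ is unramified) be unramified over $\Q$ outside the primes dividing $2g+1$ and $2$; more to the point, $L$ is unramified over $K = \Q(\sqrt{-(2g+1)})$, while $\Q(\zeta_{2g+1})/\Q$ is totally (tamely) ramified at $2g+1$ to degree $2g$, so its degree over $\Q$ being $2g > 2 = [K:\Q]$ already forces $\Q(\zeta_{2g+1}) \not\subset L$ unless one checks ramification — cleanest is to observe $[\Q(\zeta_{2g+1}):\Q] = 2g$ is even and divisible by odd primes dividing $g$, hence cannot divide $2h(-8g-4)$ in the way needed, or simply that $L/K$ is unramified at $2g+1$ whereas $\Q(\zeta_{2g+1})$ is ramified there. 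Granting $L \subsetneq M$, we get $[M:\Q] \ge 2[L:\Q]$, so the density of primes splitting completely in $L$ but not in $\Q(\zeta_{2g+1})$ is at least $\frac{1}{[L:\Q]} - \frac{1}{[M:\Q]} \ge \frac{1}{2[L:\Q]} > 0$. Such a prime $p$, chosen also to be odd, unequal to $2g+1$, and not dividing the conductor/discriminant data, is then representable as $x^2+(2g+1)y^2$ and satisfies $p \not\equiv 1 \pmod{2g+1}$, as required.

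The main obstacle I anticipate is the ramification bookkeeping needed to certify $\Q(\zeta_{2g+1}) \not\subseteq L$ cleanly for \emph{all} Sophie Germain primes $g \ge 5$ simultaneously — one wants an argument that does not depend on knowing $h(-8g-4)$ explicitly. I would resolve this by the ramification comparison above: $L$ is, by definition, unramified over $K$, so the prime above $2g+1$ in $K$ (which is ramified with $e=2$ over $\Q$, totally ramified) is unramified in $L$, giving total ramification index exactly $2$ at $2g+1$ in $L/\Q$; but in $\Q(\zeta_{2g+1})/\Q$ the prime $2g+1$ is totally ramified with index $2g \ge 10 > 2$. Hence $\Q(\zeta_{2g+1}) \not\subseteq L$, and the Chebotarev counting closes the argument. (Strictly, to invoke Theorem~\ref{prime rep thm0} one should also arrange $p$ to miss the finite set of primes dividing $\disc f_n$; since we are only excluding finitely many primes from a positive-density set, this is automatic.)
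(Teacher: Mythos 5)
Your argument is the same as the paper's: compare ramification at $2g+1$ to show $\Q(\zeta_{2g+1})\not\subseteq L$, then use Chebotarev on the compositum $L(\zeta_{2g+1})$ to find a prime splitting completely in $L$ (hence of the form $x^2+(2g+1)y^2$ by Theorem \ref{prime rep thm1}) but not in $\Q(\zeta_{2g+1})$ (hence $p\not\equiv 1 \pmod{2g+1}$). One genuine slip in your justification should be corrected, though it does not sink the proof. Since $g$ is odd, $2g+1\equiv 3\pmod 4$, so $-(2g+1)\equiv 1\pmod 4$ (not $3$): thus $\disc K=-(2g+1)$, the maximal order of $K$ is $\Z\left[\frac{1+\sqrt{-(2g+1)}}{2}\right]$, and $\mathcal{O}=\Z[\sqrt{-(2g+1)}]$ is a \emph{non-maximal} order of conductor $2$. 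Consequently $L$ is a ring class field, not the Hilbert class field, and your repeated assertion that ``$L/K$ is unramified'' is false in general (ramification above $2$ is allowed); had you literally replaced $L$ by the Hilbert class field, the splitting criterion of Theorem \ref{prime rep thm1} would no longer characterize primes of the form $x^2+(2g+1)y^2$ (already for $g=5$, where $h(-11)=1$ but $h(-44)=3$). The repair is exactly what the paper does: the conductor of $\mathcal{O}$ is $2$, which is coprime to $2g+1$, so $L/K$ is unramified at the prime above $2g+1$, giving ramification index $2$ at $2g+1$ in $L/\Q$ versus $2g>2$ in $\Q(\zeta_{2g+1})/\Q$; with that correction your ramification comparison, and hence the whole argument, goes through as in the paper.
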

\begin{proof}
	Let $K=\Q(\sqrt{-(2g+1)})$, $\mathcal{O}=\Z[\sqrt{-(2g+1)}]$ an order in $K$ with the discriminant $-4(2g+1),$ and let $L$ be the ring class field of $\mathcal{O}.$ 
	We claim first that $\Q(\zeta_{2g+1})$ is not contained in $L.$ Indeed, note that the conductor of $\mathcal{O}$ equals $2$, and hence, it follows that the ramification index of $2g+1$ in $L$ equals to $2,$ while the ramification index of $2g+1$ in $\Q(\zeta_{2g+1})$ equals to $2g >2$ because $2g+1$ is totally ramified in $\Q(\zeta_{2g+1}).$ Thus we can see that $\Q(\zeta_{2g+1})$ is not contained in $L.$ In particular, there exists a $\sigma \in \textrm{Gal}(L(\zeta_{2g+1})/\Q)$ which maps to the identity in $\textrm{Gal}(L/\Q)$ but not in $\textrm{Gal}(\Q(\zeta_{2g+1})/\Q).$ By Chebotarev density theorem, this implies that there exists a prime $p \in \Q$ such that $p$ splits completely in $L$ but not in $\Q(\zeta_{2g+1}).$ Then it follows that $p$ is a prime of the form $p=x^2 + (2g+1)y^2$ for some $x,y \in \Z$ with $p \ne 2g+1$ and $p \not \equiv 1 ~(\textrm{mod}~2g+1).$ \\
	\indent This completes the proof.
\end{proof}

\indent Now, we are ready to introduce our main result of this section:
\begin{theorem}\label{main thm}
	Let $g \geq 5$ be a Sophie Germain prime. Then there exists a finite field $k$ and an absolutely simple abelian variety $X$ of dimension $g$ over $k$ with a polarization $\mathcal{L}$ such that $\textrm{Aut}_k(X,\mathcal{L}) = C_{4g+2}$\footnote{Note that in view of \cite[Corollary 3.6]{Hwang(2019)}, $C_{4g+2}$ is a maximal such group.}.
\end{theorem}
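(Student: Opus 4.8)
The plan is to build the abelian variety directly from a carefully chosen Weil number, using Honda--Tate theory, and then to control its automorphism group via the structure of the endomorphism algebra. First I would invoke Lemma \ref{lem_p_1_2g+1} to fix a prime $p = x^2 + (2g+1)y^2$ with $p \neq 2g+1$ and $p \not\equiv 1 \pmod{2g+1}$. The idea is to take $q = p$ and to look for a $p$-Weil number $\pi$ lying in a field closely related to $\Q(\zeta_{4g+2})$ (equivalently $\Q(\zeta_{2g+1})$, since $2g+1$ is odd), so that $\Q(\pi)$ is a degree-$2g$ CM field whose unit group contains a cyclic group of order $4g+2$. Concretely I expect to take $\pi$ to be (a suitable power/multiple arranged to be a $p$-Weil number of) an element whose minimal polynomial has degree $2g$ and whose splitting forces $\Q[\pi] = \Q(\zeta_{2g+1})$; the condition $p = x^2 + (2g+1)y^2$ is exactly what lets $p$ factor appropriately in $\Q(\sqrt{-(2g+1)})$, and hence in the cyclotomic field, so that such a Weil number with the right valuations at the primes above $p$ exists.

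Next I would use Proposition \ref{local inv} to compute the local invariants of $D = \End_k^0(X)$ at the places above $p$: the point of choosing $\pi$ with $\ord_\nu(\pi)/\ord_\nu(q)$ giving invariants with denominator $1$ (together with invariant $1/2$ at real places — but $\Q(\zeta_{2g+1})$ has no real places, so those do not occur, and the complex places give $0$) is to force $d = 1$, i.e.\ $D = \Q(\zeta_{2g+1})$ itself, a CM field of degree $2g = \dim_\Q D$. By Corollary \ref{cor TateEnd0}(d) this makes $\End_k^0(X)$ commutative and equal to $\Q[\pi_X]$, and by Lemma \ref{poss end alg} we are then in case (1). Then Proposition \ref{endposs}(b) lets me replace $X$ within its isogeny class by one whose endomorphism ring is the maximal order $\Z[\zeta_{2g+1}]$, so $\Aut_k(X) \supseteq \Z[\zeta_{2g+1}]^\times \supseteq \mu_{4g+2} = C_{4g+2}$. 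To pin down a polarization $\mathcal{L}$ for which $\Aut_k(X,\mathcal{L})$ is exactly this cyclic group of order $4g+2$, I would appeal to the general principle (used in \cite{Hwang(2019)}) that for a CM abelian variety the polarized automorphism group is the group of roots of unity in the CM field compatible with the Rosati involution/CM type, which here is all of $\mu_{4g+2}$; and \cite[Corollary 3.6]{Hwang(2019)} guarantees $C_{4g+2}$ is maximal, so no larger group is possible.

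Finally I must check \emph{absolute} simplicity, which is the real crux. Over $\overline{k}$ the Frobenius that matters is a power $\pi^m$ for suitable $m$, and $X_{\overline{k}}$ fails to be simple exactly when $\Q[\pi^m] \subsetneq \Q[\pi]$ for some $m \geq 1$ — i.e.\ when $\pi^m$ generates a proper subfield. Since $\Q[\pi] = \Q(\zeta_{2g+1})$ has Galois group cyclic of order $2g$ over $\Q$, its proper subfields correspond to the divisors of $2g = 2 \cdot g$, so the only proper subfields are $\Q$, $\Q(\sqrt{\pm(2g+1)})$ (the unique quadratic one), and the unique degree-$g$ subfield. I would show that no power of $\pi$ can land in any of these: the condition $p \not\equiv 1 \pmod{2g+1}$ controls the order of Frobenius in $\Gal(\Q(\zeta_{2g+1})/\Q)$ — equivalently the splitting type of $p$ — and is precisely engineered so that $p$ does not split completely in $\Q(\zeta_{2g+1})$, which translates (via Honda--Tate, Corollary \ref{cor TateEnd0}(a) and Remark \ref{qWeil rem}) into $f_X$ remaining a power of an irreducible polynomial over every finite extension, hence $X$ absolutely simple; one also uses Remark \ref{oort rmk} to rule out the degenerate geometric endomorphism types (the $p$-rank here is positive since $p$ splits in the quadratic subfield). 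The main obstacle I anticipate is the simultaneous bookkeeping: choosing the Weil number $\pi$ so that all three constraints hold at once — it is a genuine $p$-Weil number of degree exactly $2g$ (not a proper power), its local invariants at $p$ all have denominator $1$ so that $D$ is a field, and no power of it drops into a proper subfield — and verifying that the arithmetic input from Lemma \ref{lem_p_1_2g+1} ($p = x^2 + (2g+1)y^2$ together with $p \not\equiv 1 \pmod{2g+1}$) is exactly strong enough to secure all of these.
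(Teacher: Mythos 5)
Your plan is genuinely different from the paper's (the paper works over $k=\F_{p^g}$ with a \emph{quadratic} Weil number $\pi$, a root of $t^2+ap^{(g-1)/2}t+p^g$, so that $\Q(\pi)=\Q(\sqrt{-(2g+1)})$ and $D=\End_k^0(X)$ is a degree-$g$ division algebra into which $\Q(\zeta_{4g+2})$ is embedded as a maximal subfield via a local-degree criterion, then Waterhouse and the averaging trick), but your route has a fatal gap exactly at the step you call the crux: absolute simplicity. With the prime furnished by Lemma \ref{lem_p_1_2g+1}, condition (P1) forces $p$ to split in $\Q(\sqrt{-(2g+1)})$, which is the unique quadratic subfield of $\Q(\zeta_{2g+1})$, so the Frobenius at $p$ lies in the index-two subgroup of $\Gal(\Q(\zeta_{2g+1})/\Q)\cong C_{2g}$; combined with (P2) its order is exactly $g$. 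Hence there are only two primes $\mathfrak{P},\overline{\mathfrak{P}}$ of $\Q(\zeta_{2g+1})$ above $p$, each with decomposition group $H$ of order $g$. Now let $\pi$ be \emph{any} Weil number for a power of $p$ with $\Q(\pi)=\Q(\zeta_{2g+1})$. For $\sigma\in H$ we have $\sigma((\pi))=(\pi)$ (both $\mathfrak{P}$ and $\overline{\mathfrak{P}}$ are $\sigma$-stable), so $\sigma(\pi)/\pi$ is a unit all of whose archimedean absolute values are $1$, i.e.\ a root of unity, necessarily in $\mu_{4g+2}$. Therefore $\pi^{4g+2}$ is fixed by $H$ and lies in the fixed field of $H$, namely $\Q(\sqrt{-(2g+1)})$. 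In your setting $\End_k^0(X)=\Q(\zeta_{2g+1})$ is commutative, so $X$ is ordinary, and then the simple factor of $X$ over the extension of degree $4g+2$ has commutative endomorphism algebra with center $\Q(\pi^{4g+2})$, an imaginary quadratic field: it is an ordinary elliptic curve, and $X$ becomes isogenous to the $g$-th power of it. So the variety you construct is never absolutely simple; the condition $p\not\equiv 1\pmod{2g+1}$ does not ``secure'' simplicity over extensions — for your choice of endomorphism algebra it destroys it. (Your criterion ``$X_{\overline{k}}$ fails to be simple exactly when $\Q[\pi^m]\subsetneq\Q[\pi]$'' is also not the correct statement of the Howe--Zhu result: shrinking of $\Q(\pi^m)$ can be compensated by growth of the division-algebra degree, which is precisely the mechanism the paper exploits.)

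There is also a secondary unaddressed point: the existence of a $p$-Weil number (with $q=p$ itself) generating $\Q(\zeta_{2g+1})$ requires the prime $\mathfrak{P}$ to be principal with a generator of the correct archimedean size; in general one only gets a $p^h$-Weil number after raising to the class number and adjusting by units, and none of this is verified. By contrast, the paper sidesteps both problems by accepting that the Weil number lives in the quadratic field from the start: the dimension $g$ then comes from the degree of the division algebra ($d=g$, computed from the local invariants $(\tfrac{g-1}{2})/g$ and $(\tfrac{g+1}{2})/g$ at the two places over $p$), the group $C_{4g+2}$ comes from embedding $\Z[\zeta_{4g+2}]$ into a maximal order of that division algebra (possible because $[L_{\mathfrak{P}}:K_{\mathfrak{p}}]=g$ matches the local degree of $D$), and absolute simplicity follows from Howe--Zhu applied to the quadratic Weil number. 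If you want to salvage a commutative (ordinary, CM by $\Q(\zeta_{2g+1})$) construction, you would need primes with $p\equiv 1\pmod{2g+1}$ (Frobenius of order $1$, so the above collapse does not occur), which is exactly the opposite of (P2) and is the open direction mentioned in the paper's remark about $p$-rank $g$.
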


\begin{proof}
	Let $p$ be a prime with the following two properties (P1) and (P2):\\
	\indent (P1) $p$ can be written as $p = x^2 + (2g+1)y^2$ for some $x,y \in \mathbb{Z}$ with $p \ne 2g+1$;\\
	\indent (P2) $p \not\equiv 1\pmod{2g+1}$.\\ 
	(The existence of such a prime $p$ is guaranteed by Lemma \ref{lem_p_1_2g+1}.)\\
	\indent By (P1), there exist $a,s \in \mathbb{Z}$ with $(p,a)=1$ such that the triple $(a,p,s)$ is a solution of the equation $X^2 - 4Y = -(2g+1)Z^2$. Then let $\pi$ be a zero of the quadratic polynomial $t^2 + ap^{\frac{g-1}{2}} t + p^g \in \mathbb{Z}[t]$  so that $\pi$ is a $p^g$-Weil number. By Theorem \ref{thm HondaTata}, there exists a simple abelian variety $X$ of dimension $r$ over $k:=\F_{p^g}$ such that $\pi_X$ is conjugate to $\pi,$ and, in fact, we can take $r=g$ by \cite[Proposition 2.5]{Maisner(2002)}\footnote{Alternatively, we can compute the local invariants directly to see that they are $(\frac{g-1}{2})/g, (\frac{g+1}{2})/g$ at the two places of $K$ above $p$, and $0$ at other places of $K$, and use Proposition \ref{local inv}-(b).}. Hence it follows that we have $\Q(\pi_X)=\Q(\sqrt{-(2g+1)})$\footnote{This essentially follows from the fact that $(a,p,s)$ is a solution of $X^2 - 4Y = -(2g+1)Z^2$. } and $D:=\textrm{End}_k^0(X)$ is a central simple division algebra of degree $g$ over $K:=\Q(\sqrt{-(2g+1)})$ by Lemma \ref{poss end alg}. Next, we claim that there is an embedding of $L:=\Q(\zeta_{4g+2})$ over $K$ into $D.$ Indeed, let $\mathfrak{p}$ be a prime of $K$ lying over $p$ and let $\mathfrak{P}$ be a prime of $L$ lying over $\mathfrak{p}.$ In particular, $\mathfrak{P}$ lies over $p.$ Since $p$ splits into a product of two primes in both $K$ and $L,$ 
	we have $[L_{\mathfrak{P}}:K_{\mathfrak{p}}]=g.$ On the other hand, it follows from Proposition \ref{local inv}-(a) that $D_{\mathfrak{p}}:= D \otimes_{K} K_{\mathfrak{p}}$ is a central simple division algebra of degree $g$ over $K_{\mathfrak{p}}$, and hence, we can see that $L$ embeds over $K$ into $D$ by the first theorem in \cite[p. 407]{7}, as desired. Now, it is well known that such $D$ has a maximal $\Z$-order $\mathcal{O}$ containing the ring of integers $\Z[\zeta_{4g+2}]$ of $\Q(\zeta_{4g+2}).$ Since $\mathcal{O}$ is a maximal $\Z$-order in $D,$ there exists a simple abelian variety $X^{\prime}$ over $k$ such that $X^{\prime}$ is $k$-isogenous to $X$ and $\textrm{End}_k(X^{\prime})=\mathcal{O}$ by Proposition \ref{endposs}-(b). Note also that 
	\begin{equation*}
	C_{4g+2} \cong \left \langle \zeta_{4g+2} \right \rangle \leq \Z [\zeta_{4g+2}]^{\times} =\Z^{g-1} \times \left \langle \zeta_{4g+2} \right \rangle \leq \mathcal{O}^{\times}=\textrm{Aut}_k(X^{\prime}).
	\end{equation*} 
	Now, let $\mathcal{L}$ be an ample line bundle on $X^{\prime}$, and put
	\begin{equation*}
	\mathcal{L}^{\prime}:=\bigotimes_{f \in \left \langle \zeta_{4g+2} \right \rangle} f^* \mathcal{L}.
	\end{equation*}
	Then $\mathcal{L}^{\prime}$ is an ample line bundle on $X^{\prime}$ that is preserved under the action of $\left \langle \zeta_{4g+2} \right \rangle$ so that $\left \langle \zeta_{4g+2} \right \rangle \leq \textrm{Aut}_k(X^{\prime},\mathcal{L}^{\prime}).$ Also, by \cite[Corollary 3.6]{Hwang(2019)}, we can see that $\left \langle \zeta_{4g+2} \right \rangle$ is a maximal finite subgroup of the multiplicative subgroup of $\textrm{End}_k^0(X^{\prime})=D$. Then since $\textrm{Aut}_k (X^{\prime},\mathcal{L}^{\prime})$ is a finite subgroup of $D^{\times},$ it follows that
	\begin{equation*}
	C_{4g+2} \cong \left \langle \zeta_{4g+2} \right \rangle =\textrm{Aut}_k (X^{\prime}, \mathcal{L}^{\prime}).
	\end{equation*}
	Finally, since $\pi_{X^{\prime}}$ is conjugate to $\pi,$ we can conclude that $X^{\prime}$ is absolutely simple by \cite[Proposition 3]{5}. \\
	\indent This completes the proof.
\end{proof}

\begin{remark}\label{m<g-1/2 rmk}
	More generally, we can also consider the following conditions on the existence of solutions of certain Diophantine equations: \\
	\indent (P1)$_m^{\prime}$ For each $1 \leq m \leq \frac{g-1}{2},$ there exist $a,s \in \mathbb{Z}$ with $(p,a)=1$ such that the triple $(a,p,s)$ is a solution of the equation
	\begin{equation}\label{eqn 1}
	X^2 - 4Y^{g-2m} = -(2g+1)Z^2.
	\end{equation}
	For instance, we have seen in the proof of Theorem \ref{main thm} that the condition (P1) implies the condition (P1)$_{\frac{g-1}{2}}^{\prime}$. These conditions will appear in Section \ref{asym}, again.
\end{remark}

\begin{remark}
	In view of \cite[Proposition 3.5 and Theorem 3.6]{Gon(1998)}, the absolutely simple abelian variety $X$ and $X^{\prime}$ in the proof of Theorem \ref{main thm} have $p$-rank $0.$ It might be interesting to see whether there is an example with $p$-rank being equal to $g$ (i.e.\ an example of $X$ and $X^{\prime}$ being absolutely simple ordinary abelian varieties). In this case, we note that $\textrm{End}_k^0(X)=\textrm{End}_k^0(X^{\prime})$ needs to be a CM-field of degree $2g$ by Lemma \ref{poss end alg}. (In fact, this is related to Question \ref{ques 1} below.) 
\end{remark}

We conclude this section by introducing some tables that are related to Theorem \ref{main thm}. Recall that for a given Sophie Germain prime $g \geq 5,$ the triple $(a,p,s)$ is a solution of the equation $X^2 -4Y = -(2g+1) Z^2.$ Then the following algorithm can be used to provide a quadruple $(g,p,a,s)$ for each fixed $g$ with the smallest value of the prime $p$, that is essentially obtained from the properties (P1) and (P2): 

\begin{algorithm} [!hbt]\label{Alg_1}
	\caption{~Provide a quadruple $(g,p,a,s)$ with the smallest value of the prime $p$ using (P1) and (P2) for a fixed Sophie Germain prime $g \geq 5$ and $m = \frac{g-1}{2}$ in Eqn. (3.1).}
	\begin{algorithmic}[1]
		\STATE Choose a Sophie Germain prime $g \geq 5$
		\STATE \textbf{while} $i \geq 1$ \textbf{do}
		\STATE ~~\quad $p = (i$-th prime number$)$
		\STATE ~~\quad $s_0 = 1$
		\STATE ~~\quad \textbf{while} $s_0 < \sqrt{\frac{p}{2g+1}}$ \textbf{do}
		\STATE ~~\quad\quad \textbf{if} $p - (2g+1)s_0^2$ is square \textbf{do}
		\STATE ~~\quad\quad\quad \textbf{if} $p \not= 2g+1$ and $p \not\equiv 1\pmod{2g+1}$ \textbf{do}
		\STATE ~~\quad\quad\quad\quad $a_0=\sqrt{p-(2g+1)s_0^2}$ 
		\STATE ~~\quad\quad\quad\quad $a = 2a_0, s = 2s_0$ 
		\STATE ~~\quad\quad\quad\quad print $(g,p,a,s)$
		\STATE ~~\quad\quad\quad \textbf{end if}
		\STATE ~~\quad\quad \textbf{end if}
		\STATE ~~\quad \textbf{end while}
		\STATE \textbf{end while}
	\end{algorithmic}
\end{algorithm}

Using Algorithm 1 directly, and by modifying Algorithm 1 slightly, we can obtain the following two tables:
\begin{center}
	\begin{tabular}{c c c c || c c c c}
		\hline
		$g$ & $p$ & $a$ & $s$ & $g$ & $p$ & $a$ & $s$\\
		\hline
		$5$ & $47$ & $12$ & $2$ & $191$ & $419$ & $12$ & $2$ \\
		\hline
		$11$ & $59$ & $12$ & $2$ & $233$ & $503$ & $12$ & $2$ \\
		\hline
		$23$ & $83$ & $12$ & $2$ & $239$ & $1997$ & $18$ & $4$ \\
		\hline
		$29$ & $317$ & $18$ & $4$ & $251$ & $647$ & $24$ & $2$ \\
		\hline
		$41$ & $227$ & $24$ & $2$ & $281$ & $599$ & $12$ & $2$ \\
		\hline
		$53$ & $251$ & $24$ & $2$ & $293$ & $911$ & $36$ & $2$ \\
		\hline
		$83$ & $311$ & $24$ & $2$ & $359$ & $863$ & $24$ & $2$ \\
		\hline
		$89$ & $503$ & $36$ & $2$ & $419$ & $983$ & $24$ & $2$ \\
		\hline
		$113$ & $263$ & $12$ & $2$ & $431$ & $1187$ & $36$ & $2$ \\
		\hline
		$131$ & $587$ & $36$ & $2$ & $443$ & $1031$ & $24$ & $2$ \\
		\hline
		$173$ & $383$ & $12$ & $2$ & $491$ & $1019$ & $12$ & $2$ \\
		\hline
		$179$ & $503$ & $24$ & $2$ & $509$ & $1163$ & $24$ & $2$ \\
		\hline
	\end{tabular}
	\vskip 4pt
	\textnormal{Table 2 } \\
	Quadruples \textnormal{$(g,p,a,s)$ with the smallest prime $p$ using Algorithm $1$ for each $g \leq 509$.}
\end{center}

\begin{center}
	\begin{tabular}{c c c || c c c}
		\hline
		$p$ & $a$ & $s$ & $p$ & $a$ & $s$\\
		\hline
		$59$ & $12$ & $2$ & $593$ & $30$ & $8$ \\
		\hline
		$101$ & $6$ & $4$ & $607$ & $40$ & $6$ \\
		\hline
		$167$ & $24$ & $2$ & $719$ & $24$ & $10$ \\
		\hline
		$173$ & $18$ & $4$ & $809$ & $42$ & $8$ \\
		\hline
		$211$ & $4$ & $6$ & $821$ & $54$ & $4$ \\
		\hline
		$223$ & $8$ & $6$ & $853$ & $10$ & $12$ \\
		\hline
		$271$ & $16$ & $6$ & $877$ & $14$ & $12$ \\
		\hline
		$307$ & $20$ & $6$ & $883$ & $52$ & $6$ \\
		\hline
		$317$ & $30$ & $4$ & $991$ & $56$ & $6$ \\
		\hline
		$347$ & $36$ & $2$ & $997$ & $26$ & $12$ \\
		\hline
		$449$ & $18$ & $8$ & $1097$ & $54$ & $8$ \\
		\hline
		$463$ & $32$ & $6$ & $1117$ & $34$ & $12$ \\
		\hline
	\end{tabular}
	\vskip 4pt
	\textnormal{Table 3 } \\
	Triples \textnormal{$(p,a,s)$ with fixed $g = 11$ using Algorithm 1.}
\end{center}

\section{Asymptotic behavior}\label{asym}
Let $g \geq 5$ be a (fixed) Sophie Germain prime, and let $G=C_{4g+2}.$ Let $P$ be the set of all prime numbers and define $P_G$ to be the set of all prime numbers $p$ with the property that there exists an absolutely simple abelian variety $X$ of dimension $g$ over a finite field $k$ such that $\textrm{Aut}_k (X,\mathcal{L})=G$ for some polarization $\mathcal{L}$ on $X.$ Then the following remark gives a partial description on the set $P_G$, based on the argument in the proof of Theorem \ref{main thm}.
\begin{remark}\label{PG rmk}
	For a prime $p \in P,$ we recall the following three conditions:\\
	(P1) $p$ can be written as $p = x^2 + (2g+1)y^2$ for some $x,y \in \mathbb{Z}$ with $p \ne 2g+1$;\\
	(P1)$_m^{\prime}$ For each $1 \leq m \leq \frac{g-1}{2},$ there exist $a,s \in \mathbb{Z}$ with $(p,a)=1$ such that the triple $(a,p,s)$ is a solution of the equation
	\begin{equation*}
	X^2 - 4Y^{g-2m} = -(2g+1)Z^2;
	\end{equation*}
	(P2) $p \not \equiv 1~(\textrm{mod}~2g+1).$ \\
	\indent We have seen that if a prime $p$ satisfies both (P1) and (P2), then $p \in P_G$. (Here, note again that we have assumed $m=\frac{g-1}{2}$ in the equation of the condition (P1)$_m^{\prime}$.)
\end{remark}
Now, it might be interesting to consider the following
\begin{question}\label{ques 1}
	Find the asymptotic behavior of the counting function $f_G(x):=\frac{\left|\{p \leq x~|~ p \in P_G \}\right|}{\left|\{p \leq x  ~|~ p \in P\}\right|}$ as $x \rightarrow \infty.$
\end{question}
It seems to be somewhat difficult to answer Question \ref{ques 1} completely at the moment. Hence in this section, we give an answer for the following slightly easier version of Question \ref{ques 1} in view of Remark \ref{PG rmk}: for a Sophie Germain prime $g \geq 5,$ let $P_g$ be the set of all primes satisfying both of the conditions (P1) and (P2) so that $P_g \subseteq P_G$. 
\begin{question}\label{easier ques}
	Find the asymptotic behavior of the counting function $f_g(x):=\frac{\left|\{p \leq x~|~ p \in  P_g \}\right|}{\left|\{p \leq x ~|~p \in P\}\right|}$ as $x \rightarrow \infty.$
\end{question}


To answer this question, it is useful to have the following lemma: in the sequel, let $K=\Q(\sqrt{-(2g+1)})$, $\mathcal{O}=\Z[\sqrt{-(2g+1)}]$ an order in $K$, and let $L$ be the ring class field of $\mathcal{O}$ for a Sophie Germain prime $g \geq 5.$ 
\begin{lemma}\label{asym lem}
	Let $g \geq 5$ be a Sophie Germain prime. Suppose that a prime $p$ satisfies the property (P1). Then $p$ splits completely in $L(\zeta_{2g+1})$ if and only if $p \equiv 1 ~(\textrm{mod}~2g+1)$.
\end{lemma}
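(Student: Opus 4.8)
The plan is to reduce the equivalence to the classical splitting law in the cyclotomic field $\Q(\zeta_{2g+1})$, using that $L(\zeta_{2g+1})$ is the compositum of two extensions of $\Q$ that are both Galois. Before doing that I would clear up ramification: since $p$ satisfies (P1), writing $p = x^2 + (2g+1)y^2$ with $2g+1 \ge 11$ forces $p \ge 11$ unless $y=0$, and $y=0$ would give $p = x^2$, impossible for a prime; in any case $p \ne 2$, so $p$ is odd. Also $p \ne 2g+1$ is part of (P1), and since $2g+1$ is prime this gives $p \nmid 2g+1$. Hence $p$ is unramified in $\Q(\zeta_{2g+1})$ and in $L$ (whose ramified rational primes divide $\disc(\mathcal{O}) = -4(2g+1)$, i.e.\ lie in $\{2, 2g+1\}$), so all the Frobenius bookkeeping below is legitimate.

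The first key step is Theorem \ref{prime rep thm1} applied with $n = 2g+1$: since $p$ is an odd prime not dividing $2g+1$ and $p = x^2 + (2g+1)y^2$, the prime $p$ splits completely in the ring class field $L$ of $\mathcal{O} = \Z[\sqrt{-(2g+1)}]$. The second key step is the compositum observation: $L(\zeta_{2g+1}) = L \cdot \Q(\zeta_{2g+1})$ is the compositum of the Galois extensions $L/\Q$ (ring class fields are Galois over $\Q$) and $\Q(\zeta_{2g+1})/\Q$, so $\Gal(L(\zeta_{2g+1})/\Q)$ embeds into $\Gal(L/\Q) \times \Gal(\Q(\zeta_{2g+1})/\Q)$ and, for the unramified prime $p$, the Frobenius class of $p$ upstairs is trivial if and only if both of its images are trivial; equivalently, $p$ splits completely in $L(\zeta_{2g+1})$ if and only if $p$ splits completely in both $L$ and $\Q(\zeta_{2g+1})$. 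Combining this with the first step, $p$ splits completely in $L(\zeta_{2g+1})$ if and only if $p$ splits completely in $\Q(\zeta_{2g+1})$. Finally, the standard cyclotomic criterion gives that a prime $p \nmid 2g+1$ splits completely in $\Q(\zeta_{2g+1})$ if and only if $p \equiv 1 \pmod{2g+1}$, which is exactly the asserted equivalence.

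I do not anticipate a genuine obstacle: once Theorem \ref{prime rep thm1} is invoked the argument is pure bookkeeping with Frobenius elements in a compositum. The only point worth stating with care is that $L/\Q$ need not be abelian, only Galois, which is all the compositum/Frobenius argument requires; if one prefers an abelian setting, one can instead note that $K = \Q(\sqrt{-(2g+1)}) \subseteq L$ and (since $g$ is odd, so $2g+1 \equiv 3 \pmod 4$ and the quadratic subfield of $\Q(\zeta_{2g+1})$ is $K$) also $K \subseteq \Q(\zeta_{2g+1})$, and run the same splitting argument inside the abelian extension $L(\zeta_{2g+1})/K$ together with the fact that $p$ splits in $K$; but this refinement is not needed.
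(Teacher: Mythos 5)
Your proof is correct and follows essentially the same route as the paper: (P1) plus Theorem \ref{prime rep thm1} gives complete splitting in $L$, and then the statement reduces, via the fact that a prime splits completely in the compositum $L(\zeta_{2g+1})$ if and only if it splits completely in both $L$ and $\Q(\zeta_{2g+1})$ (the paper cites \cite[Lemma 3.3.30]{Cohen(2008)} for the nontrivial direction), to the cyclotomic criterion $p \equiv 1 \pmod{2g+1}$. Your extra ramification checks and the abelian-variant remark are fine but not needed.
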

\begin{proof}
	Suppose first that $p$ splits completely in $L(\zeta_{2g+1}).$ Then it follows that $p$ splits completely in $\Q(\zeta_{2g+1}),$ which, in turn, implies that we have $p \equiv 1~(\textrm{mod}~2g+1).$ Conversely, suppose that $p \equiv 1~(\textrm{mod}~2g+1)$ so that $p$ splits completely in $\Q(\zeta_{2g+1}).$ By the property (P1) and Theorem \ref{prime rep thm1}, we also know that $p$ splits completely in $L.$ Hence we can see that $p$ splits completely in $L(\zeta_{2g+1})$ by \cite[Lemma 3.3.30]{Cohen(2008)}. \\
	\indent This completes the proof. 
\end{proof}

Now, the following theorem gives an answer for Question \ref{easier ques}.
\begin{theorem}\label{asym thm}
	For a fixed Sophie Germain prime $g \geq 5$, the counting function $f_g(x)$ tends to $\frac{1}{2 \cdot h(-8g-4)} \cdot \left(1-\frac{1}{g} \right)$ as $x \rightarrow \infty.$
\end{theorem}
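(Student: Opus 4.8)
The plan is to compute the density of $P_g$ via the Chebotarev density theorem applied to a suitable Galois extension of $\mathbb{Q}$, isolating the two defining conditions (P1) and (P2) in terms of splitting behavior. First I would recall from Theorem~\ref{prime rep thm1} that, away from the finitely many primes dividing $2(2g+1)$, condition (P1) (i.e. $p = x^2 + (2g+1)y^2$ with $p \ne 2g+1$) is equivalent to $p$ splitting completely in the ring class field $L$ of $\mathcal{O} = \mathbb{Z}[\sqrt{-(2g+1)}]$. Since $[L:\mathbb{Q}] = 2 \cdot h(-4(2g+1)) = 2 \cdot h(-8g-4)$ and $L/\mathbb{Q}$ is Galois, the set of primes satisfying (P1) has density $\frac{1}{2 \cdot h(-8g-4)}$ by Chebotarev. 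This takes care of the leading factor.

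\textbf{Incorporating (P2).} Next I would argue that, among the primes splitting completely in $L$, the condition (P2) that $p \not\equiv 1 \pmod{2g+1}$ removes exactly a proportion $\frac{1}{g}$ of them. The key input is Lemma~\ref{asym lem}: for a prime $p$ satisfying (P1), we have $p$ splits completely in the compositum $L(\zeta_{2g+1})$ if and only if $p \equiv 1 \pmod{2g+1}$. Thus within the set of primes that split completely in $L$, the subset also satisfying $p \equiv 1 \pmod{2g+1}$ is precisely the set of primes splitting completely in $L(\zeta_{2g+1})$. Since $\Q(\zeta_{2g+1})/\Q$ is totally ramified at $2g+1$ while (as shown in the proof of Lemma~\ref{lem_p_1_2g+1}) the ramification index of $2g+1$ in $L$ is only $2 < 2g$, the field $\Q(\zeta_{2g+1})$ is not contained in $L$; since $[\Q(\zeta_{2g+1}):\Q] = 2g$ is even and $\Q(\sqrt{-(2g+1)}) \subseteq L \cap \Q(\zeta_{2g+1})$, one computes $[L(\zeta_{2g+1}):L] = g$. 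Hence, by Chebotarev applied to $L(\zeta_{2g+1})/\Q$ (Galois, of degree $2g \cdot h(-8g-4)$), the primes splitting completely in $L(\zeta_{2g+1})$ have density $\frac{1}{2g \cdot h(-8g-4)}$, which is $\frac{1}{g}$ times the density of those splitting completely in $L$.

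\textbf{Assembling the count.} Combining, the density of $P_g$ — primes satisfying both (P1) and (P2) — equals
\begin{equation*}
\frac{1}{2 \cdot h(-8g-4)} - \frac{1}{2g \cdot h(-8g-4)} = \frac{1}{2 \cdot h(-8g-4)}\left(1 - \frac{1}{g}\right).
\end{equation*}
To conclude that $f_g(x) \to \frac{1}{2 \cdot h(-8g-4)}(1 - \frac{1}{g})$, I would invoke the prime number theorem for the denominator $|\{p \le x\}| \sim x/\log x$ together with the effective/asymptotic form of the Chebotarev density theorem (as cited, \cite[Theorem 3.1]{Lens}) for the numerators, noting that altering the count by the finitely many ramified or exceptional primes does not affect the limit.

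\textbf{The main obstacle} I anticipate is the clean bookkeeping of the finitely many excluded primes and, more substantively, justifying that conditions (P1) and (P2) really are captured \emph{exactly} (not just up to a density-zero error) by the stated splitting conditions — in particular verifying that $p \ne 2g+1$ and the coprimality constraints implicit in ``primitive'' representations are automatically handled, and that the equivalence in Theorem~\ref{prime rep thm1} is being applied to the correct order (discriminant $-8g-4$, conductor $2$). One must also double-check the field-degree computation $[L(\zeta_{2g+1}):\Q] = 2g \cdot h(-8g-4)$, i.e. that $L \cap \Q(\zeta_{2g+1})$ is exactly $\Q(\sqrt{-(2g+1)})$ and no larger — this uses that the only quadratic subfield of $\Q(\zeta_{2g+1})$ is $\Q(\sqrt{\pm(2g+1)})$ and ramification considerations pin down which subfields of $\Q(\zeta_{2g+1})$ can lie in the ring class field $L$.
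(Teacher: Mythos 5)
Your proposal is correct and follows essentially the same route as the paper: both decompose the primes satisfying (P1) into those with $p\equiv 1$ and $p\not\equiv 1 \pmod{2g+1}$, translate the two pieces into complete splitting in $L$ and in $L(\zeta_{2g+1})$ via Theorem~\ref{prime rep thm1} and Lemma~\ref{asym lem}, and subtract the resulting Chebotarev densities $\frac{1}{2h(-8g-4)}$ and $\frac{1}{2g\cdot h(-8g-4)}$. Your explicit check that $L\cap\Q(\zeta_{2g+1})=\Q(\sqrt{-(2g+1)})$, so that $[L(\zeta_{2g+1}):\Q]=2g\cdot h(-8g-4)$, is a correct justification of a degree computation the paper uses implicitly.
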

\begin{proof}
	For a (fixed) arbitrary $x,$ consider the following three sets: \\
	\indent $S(x)=\{p \leq x~|~ p~\textrm{satisfies (P1)}\},$ $S^{\prime}(x) =\{p \leq x~|~p \in P_g \}$, and \\
	\indent $S^{\prime\prime}(x) = \{p \leq x~|~p~\textrm{satisfies (P1) and}~p \equiv 1 ~(\textrm{mod}~2g+1)\}.$ \\
	Clearly, we have $S(x)=S^{\prime}(x) \cup S^{\prime\prime}(x)$ and $S^{\prime}(x) \cap S^{\prime\prime}(x) = \phi.$ In particular, we get that $|S{^\prime}(x)|=|S(x)|-|S^{\prime\prime}(x)|.$ Thus it suffices to consider the two quantities $|S(x)|$ and $|S^{\prime\prime}(x)|$. \\
	\indent (i) Note that $S(x)=\{p \leq x~|~p~\textrm{splits completely in $L$}\}$ by Theorem \ref{prime rep thm1}. Then it follows from Chebotarev density theorem that
	\begin{equation}
	\frac{|S(x)|}{|\{p \leq x~|~p \in P\}|} \rightarrow \frac{1}{|\textrm{Gal}(L/\Q)|}= \frac{1}{2 \cdot h(-8g-4)} ~\textrm{as}~x \rightarrow \infty.
	\end{equation}
	\indent (ii) Similarly, we note that $S^{\prime\prime}(x)=\{p \leq x~|~p~\textrm{splits completely in $L(\zeta_{2g+1})$}\}$ by Lemma \ref{asym lem}. Then, again, by Chebotarev density theorem, we get that
	\begin{equation}
	\frac{|S^{\prime\prime}(x)|}{|\{p \leq x~|~p \in P\}|} \rightarrow \frac{1}{|\textrm{Gal}(L(\zeta_{2g+1})/\Q)|}=\frac{1}{2 \cdot h(-8g-4) \cdot g} ~\textrm{as}~x \rightarrow \infty.
	\end{equation}
	Hence from (4.1), (4.2), and the argument given above, we can conclude that
	\begin{equation*}
	f_g (x)=\frac{|S^{\prime}(x)|}{|\{p \leq x~|~p \in P\}|} \rightarrow \frac{1}{2 \cdot h(-8g-4)}- \frac{1}{2 \cdot h(-8g-4) \cdot g}=\frac{1}{2 \cdot h(-8g-4)} \cdot \left(1-\frac{1}{g} \right)
	\end{equation*}
	as $x \rightarrow \infty.$ \\
	\indent This completes the proof.
\end{proof}

\begin{remark}
	We can obtain a similar result for $g=3$, and we are excluding the case just because we focused on the case of $g \geq 5$ throughout the paper.
\end{remark}
As an immediate consequence of Theorem \ref{asym thm}, we obtain:
\begin{corollary}\label{asym cor}
	For a fixed $G=C_{4g+2}$ (for a Sophie Germain prime $g \geq 5$), we have 
	\begin{equation*}
	\liminf_{x \rightarrow \infty} f_G(x) \geq \frac{1}{2 \cdot h(-8g-4)} \cdot \left(1-\frac{1}{g} \right).
	\end{equation*}
\end{corollary}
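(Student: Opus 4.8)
The plan is to deduce the corollary directly from Theorem \ref{asym thm} together with the set-theoretic inclusion $P_g \subseteq P_G$ that was established in Remark \ref{PG rmk}. First I would recall that, by definition, $f_G(x) = \frac{\left|\{p \leq x \mid p \in P_G\}\right|}{\left|\{p \leq x \mid p \in P\}\right|}$ and $f_g(x) = \frac{\left|\{p \leq x \mid p \in P_g\}\right|}{\left|\{p \leq x \mid p \in P\}\right|}$, with the same denominator in both expressions. Since every prime satisfying both (P1) and (P2) lies in $P_G$ — this is exactly the content of the last sentence of Remark \ref{PG rmk}, which in turn summarizes the construction in the proof of Theorem \ref{main thm} — we have $\{p \leq x \mid p \in P_g\} \subseteq \{p \leq x \mid p \in P_G\}$ for every $x$, and hence $f_g(x) \leq f_G(x)$ for all $x$.

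Next I would take the limit inferior of both sides as $x \to \infty$. On the left, $\liminf_{x \to \infty} f_g(x) = \lim_{x \to \infty} f_g(x)$ exists and equals $\frac{1}{2 \cdot h(-8g-4)} \cdot \left(1 - \frac{1}{g}\right)$ by Theorem \ref{asym thm}. Since $f_g(x) \leq f_G(x)$ pointwise, monotonicity of $\liminf$ gives
\begin{equation*}
\liminf_{x \to \infty} f_G(x) \geq \liminf_{x \to \infty} f_g(x) = \frac{1}{2 \cdot h(-8g-4)} \cdot \left(1 - \frac{1}{g}\right),
\end{equation*}
which is precisely the claimed bound.

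There is essentially no obstacle here: the corollary is a formal consequence of the inclusion $P_g \subseteq P_G$ and the already-proved Theorem \ref{asym thm}, and the only thing to be careful about is that we cannot upgrade the $\liminf$ to an honest limit, because we have no matching upper bound on the density of $P_G$ — the extra primes in $P_G \setminus P_g$ (those that satisfy (P2) and some condition (P1)$'_m$ with $m < \frac{g-1}{2}$ but perhaps not (P1) itself, in the notation of Remark \ref{m<g-1/2 rmk}) are not controlled by our argument. This is exactly why Question \ref{ques 1} is left open while Question \ref{easier ques} is answered, and the statement of the corollary is phrased with $\liminf$ and $\geq$ accordingly.
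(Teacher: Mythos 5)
Your argument is correct and coincides with what the paper intends: the corollary is stated as an immediate consequence of Theorem \ref{asym thm}, the implicit reasoning being exactly the inclusion $P_g \subseteq P_G$ (Remark \ref{PG rmk}), the pointwise inequality $f_g(x) \leq f_G(x)$, and monotonicity of $\liminf$. Nothing further is needed.
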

It might be interesting to see whether the equality holds or not, depending on $g.$ \\

We conclude this paper by exhibiting one concrete example:

\begin{example}
	As an illustrating example, we provide a table which consists of triples $(x, f_{11}(x), \frac{5}{33} - f_{11}(x))$ for some values of $x$ (up to $10^6$). Note that the numerical values $\frac{5}{33} - f_{11}(x)$ are evaluated by rounding from $9$ decimal places.
	\begin{center}
		\begin{tabular}{c c c}
			\hline
			$x$ & $f_{11}(x)$ & $\frac{5}{33} - f_{11}(x)$\\
			\hline
			$100$ & $\frac{1}{25}$ & $0.11151515$ \\
			\hline
			$150$ & $\frac{2}{35}$ & $0.09437229$ \\
			\hline
			$200$ & $\frac{2}{23}$ & $0.06455863$ \\
			\hline
			$10^3$ & $\frac{11}{84}$ & $0.02056277$  \\
			\hline
			$10^4$ & $\frac{175}{1229}$ &  $0.00912296$  \\
			\hline
			$10^5$ & $\frac{713}{4796}$ & $0.00284960$\\
			\hline
			$10^6$ & $\frac{3949}{26166}$ & $0.00059411$\\
			\hline
		\end{tabular}
		\vskip 4pt
		\textnormal{Table 4} \\
		\textnormal{Triples $(x, f_{11}(x), \frac{5}{33} - f_{11}(x))$.}
	\end{center}
	In fact, Theorem \ref{asym thm} tells us that the value $\frac{5}{33}-f_{11}(x)$ tends to $0$ as $x \rightarrow \infty.$
\end{example}
Also, to visualize the situation by using a graph, we introduce the following algorithm:
\begin{algorithm} [!hbt]\label{Alg_2}
	\caption{~Provide a plotted $2$-dimensional graph $(x,f_g (x))$ for prime numbers $x$.}
	\begin{algorithmic}[1]
		\STATE Choose a Sophie Germain prime $g \geq 5$, upper bound $u$, $i = 1$ 
		\STATE $np =$ the number of primes not exceeding $u$
		\STATE Initialize List $= [], npg = 0$.
		\STATE \textbf{while} $i \leq np$ \textbf{do}
		\STATE ~~\quad $p = (i$-th prime number$)$
		\STATE ~~\quad $s = 1$
		\STATE ~~\quad \textbf{while} $s < \sqrt{\frac{p}{2g+1}}$ \textbf{do}
		\STATE ~~\quad\quad \textbf{if} $p - (2g+1)s^2$ is square \textbf{do}
		\STATE ~~\quad\quad\quad \textbf{if} $p \not= 2g+1$ and $p \not\equiv 1\pmod{2g+1}$ \textbf{do}
		\STATE ~~\quad\quad\quad\quad $npg++$
		\STATE ~~\quad\quad\quad\quad Go to line $16$
		\STATE ~~\quad\quad\quad \textbf{end if}
		\STATE ~~\quad\quad \textbf{end if}
		\STATE ~~\quad\quad $s = s + 1$
		\STATE ~~\quad \textbf{end while}
		\STATE ~~\quad Append $(p,\frac{npg}{i})$ to List.
		\STATE ~~\quad $i = i + 1$
		\STATE \textbf{end while}
		\STATE Plot the points $(p,\frac{npg}{i})$ in List and $y = \frac{5}{33}$ for $x \in [0, u]$ 
	\end{algorithmic}
\end{algorithm}

\newpage
Using Algorithm $2$ for $g = 11$ and $u = 10^{6}$, we obtain the following graph. 
\begin{figure}[h] 
	
	\begin{center}
		
		\includegraphics[width=0.75\linewidth]{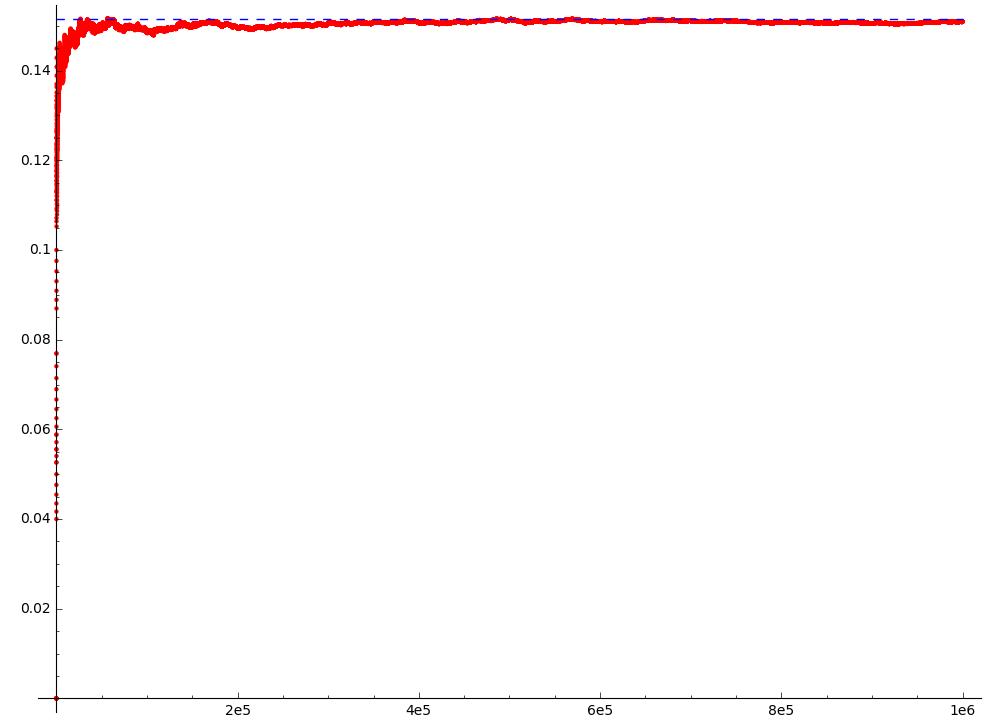}
		
	\end{center}
	
	\caption{A plotted $2$-dimensional graph for $2 \leq x \leq 10^{6}$ with $g = 11$. The red points are $(x,f_{11}(x))$ for prime numbers $x$, and the blue dotted straight line is $y = \frac{5}{33}$.}
	
	\label{fig:long}
	
	\label{fig:onecol}
	
\end{figure}

\section*{Acknowledgement}
WonTae Hwang was supported by a KIAS Individual Grant (MG069901) at Korea Institute for Advanced Study. Kyunghwan Song was supported by Basic Science Research Program through the National Research Foundation of Korea funded by the Ministry of Education (Grant Number: NRF-2019R1A6A1A11051177).

\bibliographystyle{amsplain}

\end{document}